\newcommand{\vect}[1]{\underline{\boldsymbol{#1}}}
\newcommand{\hookdoubleheadrightarrow}{
  \hookrightarrow\mathrel{\mspace{-15mu}}\rightarrow
}
\newtheorem{theoremalph}{Theorem}
\newtheorem{coralph}[theoremalph]{Corollary}
\newtheorem{theorem}{Theorem}[section]
\newtheorem{lemma}[theorem]{Lemma}
\newtheorem*{theorem*}{Theorem}
\newtheorem*{lemma*}{Lemma}
\newtheorem*{proposition*} {Proposition}
\theoremstyle{definition}
\newtheorem{definition}[theorem]{Definition}
\newtheorem{remark}[theorem]{Remark}
\setlist[enumerate]{align=left}
\title{Link conditions for the Haagerup property}
\author{Calum J. Ashcroft}
\date{\vspace{-2em}}
\begin{document}
\maketitle
	\begin{abstract}
We provide a condition on the links of a polygonal complex $X$ that is sufficient to ensure $Aut(X)$ has the Haagerup property, and hence so do any closed subgroups of $Aut(X)$ (in particular, any group acting properly on $X$). We provide an application of this work by considering the group of automorphisms of simply-connected triangle complexes where the link of every vertex is isomorphic to the graph $F090A$, as constructed by \'{S}wi\k{a}tkowski. \end{abstract}

\section{Introduction}
Consider a topological group $G$ acting on a polygonal complex $X$, i.e. a two-dimensional metric polyhedral complex. One can ask how local properties of $X$ can give large scale information about the group-theoretic and geometric properties of $G$. It is often most natural to consider properties relating to the \emph{link} of vertices in $X$. For a polygonal complex $X$ and a vertex $v$ we define the \emph{link} of $v$, $Lk_{X}(v)$ (or simply $Lk(v)$ when $X$ is clear from context), as the graph whose vertices are the edges of $X$ incident at $v$, and two vertices $e_{1}$ and $e_{2}$ are connected by an edge $f$ in $Lk(v)$ if the edges $e_{1}$ and $e_{2}$ in $X$ are adjacent to a common face $f$. We can endow the link graph with the \emph{angular metric}; an edge $f=(e_{1},e_{2})$ in $Lk(v)$ has length $\alpha$, where $\alpha$ is the angle between $e_{1}$ and $e_{2}$ at $v$ in the shared face $f$. 

\emph{\.{Z}uk's criterion} gives a local condition to prove a group has Property (T): if $X$ is a simply-connected triangular complex, $G$ acts on $X$ properly discontinuously and cocompactly, and $\lambda_{1}(Lk_{X}(v))>1\slash 2$ for every vertex $v$, then $G$ has Property (T) \cite{zuk1996,Ballmann-Swiatkowski}.

The purpose of this work is to provide a sufficient local condition, which should be often (computationally) checkable in practice, to prove that a group has the Haagerup property, a strong negation of Property (T). In particular, we study the property of polygonal complexes of being \emph{evenly $\pi$-separated}.

We will assume that for any polygonal complex $X$, the set of isometry classes of faces in $X$, $Shapes(X)$, is finite, and also that $X$ is \emph{proper}, i.e. closed balls are compact. Importantly, since $X$ is a proper metric space, it is locally-compact. Of course, if $X$ is CAT(0), then it is simply-connected. Furthermore, since $X$ is second-countable, Hausdorff, and locally-compact, so is $Aut(X)$. Our main theorem is as follows.

\begin{theoremalph}\label{mainthm: link conditions for the Haagerup property}
Let $X$ be an evenly $\pi$-separated CAT(0) polygonal complex. Then $Aut(X)$ has the Haagerup property.
\end{theoremalph}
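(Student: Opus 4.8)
The plan is to extract from the evenly $\pi$-separated structure an $\mathrm{Aut}(X)$-invariant \emph{space with walls} (or measured wall space) on $X$ whose associated wall pseudo-metric is proper, and then to convert a proper invariant conditionally negative definite kernel on $X$ into a proper continuous conditionally negative definite function on the locally compact second countable group $\mathrm{Aut}(X)$. First I would construct the walls. Each wall should be obtained by developing a local transversal across faces: inside a single polygon a wall segment runs from the interior of one edge to the interior of another, and the $\pi$-separation hypothesis on the links is precisely what forces this transversal, continued through the successive faces around an edge, to close up to an embedded, two-sided subset $W \subseteq X$, so that $X \setminus W$ has exactly two path components, the \emph{halfspaces} $W^{+}$ and $W^{-}$. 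Automorphisms of $X$ permute the resulting family $\mathcal{W}$ of walls and carry halfspaces to halfspaces, so the whole structure is $\mathrm{Aut}(X)$-equivariant.

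Next I would show the wall pseudo-metric is proper and conditionally negative definite. For $x, y \in X$ let $\mathcal{W}(x \mid y) \subseteq \mathcal{W}$ be the walls separating $x$ from $y$. As $X$ is CAT(0) the geodesic $[x,y]$ is unique, and each wall in $\mathcal{W}(x \mid y)$ crosses $[x,y]$; the ``evenly'' part of the hypothesis should give a uniform lower bound $\varepsilon > 0$ on the spacing of consecutive crossings along any geodesic, whence $\mathcal{W}(x \mid y)$ is finite with $\# \mathcal{W}(x \mid y) \geq c\, d_{X}(x,y) - C$ for constants depending only on $\mathrm{Shapes}(X)$ and the separation data. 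Thus $d_{\mathcal{W}}(x,y) := \#\mathcal{W}(x \mid y)$ is an $\mathrm{Aut}(X)$-invariant pseudo-metric on $X$, proper relative to $d_{X}$. Fixing $x_{0}$ and using the standard identity $\mathcal{W}(x \mid y) = \mathcal{W}(x_{0} \mid x) \triangle \mathcal{W}(x_{0} \mid y)$ together with the finiteness just obtained, the assignment $x \mapsto \xi(x) := \chi_{\mathcal{W}(x_{0} \mid x)} \in \ell^{2}(\mathcal{W})$ realises $d_{\mathcal{W}}(x,y) = \| \xi(x) - \xi(y) \|_{2}^{2}$, so $d_{\mathcal{W}}$ is conditionally negative definite.

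Finally I would transfer this to $\mathrm{Aut}(X)$. The action of $\mathrm{Aut}(X)$ on $X$ is continuous and metrically proper, since $X$ is proper and $\mathrm{Aut}(X)$ carries the compact-open topology. Pick $x_{0} \in X$ and put $\varphi(g) := d_{\mathcal{W}}(x_{0}, g x_{0})$. Then $\varphi$ is conditionally negative definite, being the restriction to an orbit of the invariant conditionally negative definite kernel $d_{\mathcal{W}}$ (equivalently, $g \mapsto \xi(g x_{0})$ defines an affine isometric action of $\mathrm{Aut}(X)$ on $\overline{\mathrm{span}}\{\xi(gx_{0}) - \xi(x_{0})\}$); it is continuous --- indeed locally constant --- because automorphisms close to $g$ agree with $g$ on a large ball about $x_{0}$ and hence do not change $\mathcal{W}(x_{0} \mid g x_{0})$; and it is proper because $g \to \infty$ in $\mathrm{Aut}(X)$ forces $d_{X}(x_{0}, g x_{0}) \to \infty$ and hence, by the previous step, $\varphi(g) \to \infty$. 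A locally compact second countable group admitting a proper continuous conditionally negative definite function has the Haagerup property; this gives the theorem, and closed subgroups inherit it by restricting $\varphi$.

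The substantive difficulties are concentrated in the construction of the walls and in the lower bound of the second step: one must check that the transversal produced by the evenly $\pi$-separated condition develops around each edge without crossing itself and genuinely bipartitions $X$ into two halfspaces, and that these walls are dense enough along geodesics for $d_{\mathcal{W}}$ to be proper rather than merely unbounded. The role of $\pi$ is to dictate how a wall segment continues through a face --- crossing to the edge ``antipodal'' at angle $\pi$ --- while evenness supplies the uniform spacing that upgrades unboundedness to properness.
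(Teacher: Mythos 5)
Your overall strategy (walls from developed transversals, a wall pseudo-metric compared linearly with $d_X$, then a proper continuous conditionally negative definite function $g\mapsto d_{\mathcal W}(x_0,gx_0)$ on the locally compact group) is the right shape, and your continuity and properness transfer to $Aut(X)$ are fine. But there is a genuine gap at the heart of the construction: you treat the walls as \emph{canonically} determined by their germs --- a transversal that ``continues through the antipodal edge'' and closes up into an embedded two-sided set with exactly two complementary halfspaces, so that only finitely many walls separate two given points and counting measure works. In the setting of the theorem none of this holds. A $\pi$-separated cutset in a link has at least two elements and is in general one of \emph{many} cutsets (with many admissible partitions) through a given direction, so the continuation of a wall across a vertex involves genuine choices; the resulting separating sets are convex \emph{trees} (the paper's aperiodic hypergraphs), whose complements can have many components, which is why each hypergraph contributes one wall per component with the associated partition data. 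Moreover, since there is no cocompact lattice and vertex stabilizers may be uncountable (this is exactly the situation for \'{S}wi\k{a}tkowski's $F090A$ complexes), the family of global walls crossing a fixed geodesic segment is typically uncountable: your uniform-spacing bound controls the number of crossing \emph{germs} along $[x,y]$, not the number of global walls extending each germ. So $\#\mathcal W(x\mid y)$ is not finite, the $\ell^2(\mathcal W)$ embedding collapses, and $d_{\mathcal W}$ as you define it is identically $+\infty$; nor can you cure this by an arbitrary selection of one wall per germ, because such a selection will not be $Aut(X)$-invariant.

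This is precisely where the ``evenly'' hypothesis enters, and you have misread its role: it is not a spacing condition (that is what $\pi$-separation already provides, giving geodesy/convexity of the hypergraphs and, via the finiteness of $Shapes(X)$, the linear upper bound on separating walls), but the existence of an $Aut(X)$-invariant strictly positive integer solution $\mu(C,P)$ of the gluing equations. The paper uses these weights to put a genuine measure on the (uncountable, compact) space of marked hypergraph walls: walls are parametrised by consistent choices $\vect{x}\in\mathcal T$ of cutset--partition pairs at every vertex, cylinder sets are assigned measure $1/(M(\Lambda_{v_1})\cdots M(\Lambda_{v_n}))$, the gluing equations make this consistent and $Aut(X)$-invariant, and Carath\'eodory extension produces the measure $\nu$. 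Then $\nu(\omega(x,y))$ replaces your count, the hypergraph separation lemma gives the lower bound $(d(x,y)-1)/\lambda$, and \cite[Proposition 1]{Cheriz-Martin-Valette_haagerupproperty} (or the conditionally negative definite kernel $d_\nu$, as you intend) yields the Haagerup property. Without the measured-walls layer and the invariant solution of the gluing equations, your argument does not go through in the stated generality; it would only work in special cases (e.g.\ canonical hyperplane-like walls in square complexes) where the development is deterministic and the wall set is discrete and invariant.
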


Since the Haagerup property passes to closed subgroups, we can immediately deduce the following.
\begin{coralph}\label{maincor: link conditions for the Haagerup property for closed subgroups}
Let $X$ be an evenly $\pi$-separated CAT(0) polygonal complex and let $G$ be a group acting properly on $X$. Then $G$ has the Haagerup property.
\end{coralph}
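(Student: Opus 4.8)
The plan is to deduce Corollary~\ref{maincor: link conditions for the Haagerup property for closed subgroups} from Theorem~\ref{mainthm: link conditions for the Haagerup property} together with the standard permanence properties of the Haagerup property, so that essentially all the work is already done once the main theorem is proved. A proper action of $G$ on $X$ amounts to a continuous homomorphism $\rho\colon G \to Aut(X)$, and the first step is to extract the two topological consequences of properness. The kernel $N = \ker \rho$ is compact: it is a closed subgroup of any point stabiliser $\mathrm{Stab}_G(v)$, and point stabilisers of a proper action on a locally compact space are compact. Moreover the image $\rho(G)$ is a \emph{closed} subgroup of $Aut(X)$; this is again a general fact about proper actions, using that $X$ is proper and hence locally compact, and that $Aut(X)$ is locally compact, Hausdorff and second countable as already noted in the introduction. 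In the case of primary interest, when $G$ is discrete, both points are immediate: properness forces $N$ to be finite and $\rho(G)$ to be a discrete, hence closed, subgroup of $Aut(X)$.

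The second step is to invoke Theorem~\ref{mainthm: link conditions for the Haagerup property}: since $X$ is an evenly $\pi$-separated CAT(0) polygonal complex, $Aut(X)$ has the Haagerup property. The Haagerup property is inherited by closed subgroups of a locally compact group, so $\rho(G)$ has the Haagerup property.

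Finally, $G$ fits into a short exact sequence $1 \to N \to G \to \rho(G) \to 1$ with $N$ compact and $\rho(G)$ Haagerup. Since the Haagerup property is stable under extensions with compact kernel — equivalently, it depends only on the quotient $G/N$ by a compact normal subgroup — it follows that $G$ has the Haagerup property, completing the proof. The only place where any genuine care is needed is the topological bookkeeping in the first step, namely verifying that a proper action really does produce a closed image and a compact kernel; for discrete $G$, or when $G$ is taken to be a closed subgroup of $Aut(X)$ from the outset, there is nothing to check and the corollary is immediate.
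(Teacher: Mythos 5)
Your proposal is correct and follows essentially the same route as the paper, which deduces the corollary in one line from Theorem~\ref{mainthm: link conditions for the Haagerup property} and the fact that the Haagerup property passes to closed subgroups (treating a group acting properly on $X$ as, in effect, a closed subgroup of $Aut(X)$). The extra bookkeeping you supply --- compactness of the kernel, closedness of the image, and stability of the Haagerup property under extensions with compact kernel --- is sound and merely makes explicit what the paper leaves implicit.
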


In the opposite direction to Property (T); if $G$ acts on a CAT(0) polygonal complex $X$ properly discontinuously and cocompactly, and for each vertex $v$ the edges of $Lk_{X}(v)$ can be partitioned into two-sided \emph{$\pi$-separated cutsets}, then $G$ acts properly discontinuously on a CAT(0) cube complex (this action is also cocompact if $G$ is hyperbolic) \cite[Example 4.3]{Hruska-Wise}. The requirement that the cut sets are partitions was weakened in \cite{Ashcroft2020LinkCF} to the requirement that the cut sets satisfy certain gluing equations and are \emph{equatable}. The existence of a properly discontinuous and cocompact action of a group $G$ on $X$ is not clear in many cases, and so this does not provide an effective way to negate Property (T) in $Aut(X)$. Examples of such complexes are those constructed in \cite{Swiatkowski} with every link isomorphic to the graph $F090A$. There are discrete groups that have the Haagerup property but do not act properly on any CAT(0) cube complex. Some examples of such groups are provided by complex hyperbolic lattices in dimensions greater than $2$ \cite{bekkadelaharpe,delzant2019cubulable}. Importantly, it should be easier to show that a group has the Haagerup property than finding a proper action on a CAT(0) cube complex, since such an action implies the Haagerup property \cite{Cheriz-Martin-Valette_haagerupproperty}.

It is therefore natural to consider weakening the requirements of \cite[Theorem A]{Ashcroft2020LinkCF}. We do this by removing the requirement of a properly discontinuous and cocompact action. However, we still require a solution to the gluing equations, and furthermore, we now require this solution to be $Aut(X)$-invariant.

We provide an application of Theorem \ref{mainthm: link conditions for the Haagerup property} by considering two of the complexes constructed in \cite{Swiatkowski}. The graph F090A, often known as the \emph{Foster graph}, is the distance-regular cubic graph on $90$ vertices appearing in \cite{fostercensus}.

\begin{coralph}\label{maincor: application to F090A complexes}
Let $X$ be a simply-connected triangular complex such that every triangle is a unit equilateral Euclidean triangle and the link of every vertex is isomorphic to $F090A$ as non-metric graphs. Then $Aut(X)$ has the Haagerup property.
\end{coralph}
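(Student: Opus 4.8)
The plan is to verify that any such complex $X$ satisfies the hypotheses of Theorem~\ref{mainthm: link conditions for the Haagerup property}, namely that $X$ is CAT(0) and evenly $\pi$-separated, and then invoke that theorem directly. Since every triangle is a unit equilateral Euclidean triangle, each link carries the angular metric in which every edge has length $\pi/3$; thus the combinatorial girth condition translates into a metric statement about systoles of links.

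First I would establish that $X$ is CAT(0). By the link condition (Gromov's lemma) for simply-connected polygonal complexes, it suffices to check that each vertex link, with the angular metric, has no closed geodesic of length $< 2\pi$; since $X$ is simply-connected by hypothesis, CAT(0) then follows. Because each edge of $F090A$ has length $\pi/3$ in the angular metric, a closed geodesic of length $< 2\pi$ would correspond to an embedded cycle in $F090A$ of combinatorial length $< 6$, i.e. girth $< 6$. The Foster graph $F090A$ has girth $10$, so no such cycle exists, and $X$ is CAT(0). (One must note the links are genuinely metric graphs here only through edge-lengths $\pi/3$; the statement that the links are isomorphic to $F090A$ \emph{as non-metric graphs} is exactly what we need, since the metric is then forced by the equilateral triangle condition.)

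Next I would verify that $X$ is evenly $\pi$-separated. This is a condition on the links $Lk_X(v) \cong F090A$: one must exhibit, in an $Aut(X)$-invariant way, the relevant family of $\pi$-separated cutsets (equivalently, a solution to the gluing equations that is invariant under the automorphism group). Here the key point is that $F090A$, being a highly symmetric distance-regular graph, has an essentially canonical such structure — for instance its natural cutsets arising from the distance partition or from the bipartition — so the relevant combinatorial data can be chosen canonically and is therefore automatically preserved by every graph automorphism, hence by $Aut(X)$ acting on links. I would check the separation distance: a cutset is $\pi$-separated when any two of its edges are at angular distance $\ge \pi$ in the link, i.e. at combinatorial distance $\ge 3$ in $F090A$; the chosen cutsets must meet this bound, and one verifies the gluing equations of \cite{Ashcroft2020LinkCF} hold for them. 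Finally, since $X$ is CAT(0) and evenly $\pi$-separated, Theorem~\ref{mainthm: link conditions for the Haagerup property} gives that $Aut(X)$ has the Haagerup property.

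I expect the main obstacle to be the second step: concretely identifying the cutsets in $F090A$ and checking both the $\pi$-separation (distance-$\ge 3$) condition and the gluing equations, while simultaneously arguing they form an $Aut(X)$-invariant family. This is where one genuinely uses the specific combinatorics of the Foster graph — its distance-regularity, girth $10$, and bipartiteness — rather than soft arguments, and where the two complexes of \cite{Swiatkowski} referred to in the text presumably differ in how the local data patches together globally.
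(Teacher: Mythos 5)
Your overall strategy is the same as the paper's (show $X$ is CAT(0) and evenly $\pi$-separated, then apply Theorem~\ref{mainthm: link conditions for the Haagerup property}), and your CAT(0) step is exactly the paper's argument: girth $10$ of $F090A$, edge length $\pi/3$ in the angular metric, Gromov's link condition. The gap is precisely the step you flag as the main obstacle and then do not carry out: you never exhibit the $\pi$-separated vertex cutsets, verify the remaining clauses of vertex $\pi$-separation, or solve the gluing equations. Moreover, the candidates you suggest would fail: the bipartition classes and the distance-partition spheres of $F090A$ are not $3$-separated, since two neighbours of a common vertex lie in the same bipartition class (and typically in the same sphere) at combinatorial distance $2<3$; so these ``canonical'' families do not satisfy the $\pi$-separation requirement, and nothing in your sketch addresses the further requirements that the two distinct neighbours of each vertex be separated by some cutset and that every pair of points at distance $\geq\pi$ be separated by some cutset.

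The paper fills this in with concrete, non-soft input. A computer search produces three explicit minimal $3$-separated cutsets $C_1,C_2,C_3$ of $F090A$, each with exactly two complementary components ($*$-separated cutsets). Distance-transitivity of $Aut(F090A)$ is then used to translate these so that the cutsets cover $V(F090A)$, the three neighbours of each vertex are pairwise separated, and every pair of vertices at distance $3\leq d\leq 8$ is separated (checked on one representative pair per distance); this gives vertex $3$-separation in the combinatorial metric, hence vertex $\pi$-separation after rescaling by $\pi/3$. The $Aut(X)$-invariant solution of the gluing equations is then obtained trivially by setting every weight $\mu(C,P)=1$: the point is a counting identity, proved via vertex- and distance-transitivity of $Aut(F090A)$, that $\vert\mathcal{C}(v,i,j)\vert=M/3$ independently of $v$ and $i\neq j$, together with the observation that the canonical two-element partitions of the $*$-separated cutsets are equatable along each edge, so both sides of each gluing equation equal $M/3$. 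Without an argument of this kind (explicit cutsets plus the counting identity), your proposal does not establish that $X$ is evenly $\pi$-separated, and the corollary does not follow.
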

In fact,  there are (up to isomorphism) exactly two triangular complexes  $X$ as above. For either of the two complexes, Aut(X) is \emph{flexible}, so that vertex stabilizers are uncountable and the compact-open topology is non-discrete \cite{Swiatkowski}.

\subsection{Structure of the paper}

This paper is structured as follows. We define the required conditions on graphs and complexes in Section \ref{sec: Link conditions}. In Section \ref{measured wallspaces}, we remind the reader of the definition of spaces with measured walls, as introduced by \cite{Cheriz-Martin-Valette_haagerupproperty}. In Section \ref{section: Constructing hypergraphs in polygonal complexes}, we build separating convex trees in polygonal complexes, which we then use to construct a space with measured walls. In Section \ref{section: Haagerup property for groups acting on polygonal complexes}, we prove that the resulting action on the wall space is proper and apply a result of \cite{Cheriz-Martin-Valette_haagerupproperty} to prove Theorem \ref{mainthm: link conditions for the Haagerup property}. We then apply Theorem \ref{mainthm: link conditions for the Haagerup property} to some specific polygonal complexes in Section \ref{section: applications} in order to deduce Corollary \ref{maincor: application to F090A complexes}.

\section*{Acknowledgements}
I would like to thank Pierre-Emmanuel Caprace for asking whether the techniques of \cite{Ashcroft2020LinkCF} could be applied to the complexes constructed by \'{S}wi\k{a}tkowski, leading to this work, as well as for comments on an earlier draft of this work. As always, I would like to thank Henry Wilton for his support, guidance, and helpful discussions.

\section{Some conditions on links}\label{sec: Link conditions}
We begin by discussing cutsets in graphs; most of the following arises in \cite{Ashcroft2020LinkCF}. The \emph{combinatorial metric} on a graph $\Gamma$ is the path metric induced by assigning each edge of $\Gamma$ length $1$. We will be using Serre graphs.
\begin{definition}
	Let $\Gamma$ be a finite metric graph.
	\begin{enumerate}[label=\roman*)]
	\item A vertex $v$ (respectively edge $e$) is a \emph{cut vertex} (respectively \emph{cut edge}) if $\Gamma - \{v\}$ (respectively $\Gamma -\{e\}$) is disconnected as a topological space.
		    \item A set $C\subseteq \Gamma$ is a \emph{cutset} if $\Gamma - C$ is disconnected as a topological space. It is an \emph{edge cutset} if $C\subseteq E(\Gamma)$ and a \emph{vertex cutset} if $C\subseteq V(\Gamma).$
	    \item An edge cutset $C$ is \emph{proper} if for any edge $e\in C$, the endpoints of $e$ lie in distinct components of $\Gamma-C$. A vertex cutset $C$ is \emph{proper} if for any vertex $u\in C$, and any distinct vertices $v,w$ adjacent to $u$, the vertices $v$ and $w$ lie in distinct components of $\Gamma-C$.
	    \item  For an edge $e$ in $\Gamma$ let $m(e)$ be the midpoint of $e$, and for $v$ a vertex let $m(v)=v$. For $\sigma >0$, a set $C\subseteq V(\Gamma)\sqcup E(\Gamma)$ is \emph{$\sigma$-separated} if for all distinct $\alpha ,\beta \in C$, $d_{\Gamma}(m(\alpha),m(\beta))\geq \sigma .$
	   \end{enumerate}

	   Given a cutset $C$, we can assign a partition $\mathcal{P}(C)$ to $\pi_{0}(\Gamma-C)$; we \emph{always require} that such a partition is at least as coarse as connectivity in $\Gamma - C$, and each partition contains at least two elements. The \emph{canonical partition} of $C$ is that induced by connectivity in $\Gamma -C$.
\end{definition}

\begin{definition}[\emph{Edge separated}]
	    
Let $\Gamma$ be a finite metric graph, and let $\sigma>0$. We will say that $\Gamma$ is \emph{edge $\sigma$-separated} if $\Gamma$ is connected, contains no vertices of degree $1$, and there exists a collection of proper $\sigma$-separated edge cutsets $C_{i}\subseteq E(\Gamma)$, with $\vert C_{i}\vert\geq 2$ for each $i$ and $\cup_{i}C_{i}=E(\Gamma)$. 
\end{definition}

\begin{definition}[\emph{Vertex separated}]
   Let $\Gamma$ be a finite metric graph and let $\sigma>0$. We will say that $\Gamma$ is \emph{vertex $\sigma$-separated} if:
    \begin{enumerate}[label=\roman*)]
        \item $\Gamma$ is connected and contains no vertices of degree $1$,
        \item there exists a collection of $\sigma$-separated vertex cutsets $C_{i}\subseteq V(\Gamma)$, such that $\vert C_{i}\vert\geq 2$ for each $i$ and $\cup_{i}C_{i}=V(\Gamma)$,
        \item for any vertex $v$ and any distinct vertices $w,w'$ adjacent to $v$ there exists a $\sigma$-separated vertex cutset $C_{i}$ such that $w$ and $w'$ lie in distinct components of $\Gamma - C_{i}$, and 
        \item for any points $u$ and $v$ in $\Gamma$ with $d(u,v)\geq \sigma$, there exists a cutset $C_{i}$ with $u$ and $v$ lying in distinct components of $\Gamma - C_{i}$.
    \end{enumerate}
    	
\end{definition} 
Note that, importantly, in general we do not require vertex cutsets to be proper. This definition is not as difficult to verify as it may seem; importantly, we need only consider separating vertices $u,v$ with $d(u,v)\geq \sigma,$ rather than arbitrary points.
\begin{lemma}\label{lem: vertex separated condition}\cite[Lemma 2.10]{Ashcroft2020LinkCF}
  Let $n\geq 2$, and let $\Gamma$ be a graph endowed with the combinatorial metric, such that $\Gamma$ is connected, contains no vertices of degree $1$, and $girth(\Gamma)\geq 2 n$. Suppose there exists a collection of $n$-separated vertex cutsets $\mathcal{C}=\{C_{1},\hdots , C_{m}\}$ such that: 
  \begin{enumerate}[label=$\roman*)$]
      \item  $\cup_{i}C_{i}=V(\Gamma)$,
      \item  $\vert C_{i}\vert \geq 2$ for each $i$,
      \item for each vertex $v$ and distinct $w$, $w'$ adjacent to $v$ there exists a $n$-separated cutset $C_{i}$ with $w$ and $w'$ lying in distinct components of $\Gamma - C_{i}$, and 
      \item furthermore that for any pair of vertices $u,v$ with $d_{\Gamma}(u,v)\geq n$ there exists a cutset $C_{i}$ with $u$ and $v$ lying in distinct components of $\Gamma -C_{i}$.
  \end{enumerate}
 Then $\Gamma$ is vertex $n$-separated with the collection of cutsets $\mathcal{C}$.
\end{lemma}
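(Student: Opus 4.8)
The plan is to notice that conditions (i), (ii) and (iii) in the definition of \emph{vertex $n$-separated} are, together with the standing assumption that the $C_i$ are $n$-separated vertex cutsets, literally among the hypotheses of the lemma; so the only real content is to upgrade condition (iv) from pairs of \emph{vertices} at distance at least $n$ to arbitrary pairs of \emph{points} at distance at least $n$, using the \emph{same} collection $\mathcal{C}$. I would begin with two harmless preliminary observations: since $girth(\Gamma)\geq 2n\geq 4$ the graph $\Gamma$ has no loops or multiple edges, and any two points in the interior of a single edge are at distance less than $1<n$; hence a pair of points at distance at least $n$ always lies in two distinct edges (or at vertices).

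The heart of the argument is a reduction to the vertex case. Given points $u,v$ with $d_{\Gamma}(u,v)\geq n$, I would produce vertices $u^{*},v^{*}$ with $d_{\Gamma}(u^{*},v^{*})\geq n$ such that $u^{*}=u$ if $u$ is a vertex and otherwise $u^{*}$ is an endpoint of the edge containing $u$, and similarly for $v^{*}$. Granting this, hypothesis (iv) supplies some $C_{i}\in\mathcal{C}$ with $u^{*}$ and $v^{*}$ in distinct components of $\Gamma-C_{i}$; in particular $u^{*},v^{*}\notin C_{i}$, so the half-open sub-arc joining $u$ to $u^{*}$ inside its edge avoids $C_{i}$ entirely, placing $u$ in the component of $u^{*}$, and likewise $v$ in the component of $v^{*}$. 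Hence $C_{i}$ separates $u$ from $v$, which is exactly condition (iv). Note that this step relies crucially on the fact that, by definition, a cutset that \emph{separates} two vertices cannot contain either of them.

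To produce $u^{*},v^{*}$ I would split into cases. If $u,v$ are both vertices, take $u^{*}=u$, $v^{*}=v$. If $u$ is a vertex and $v$ lies in the interior of an edge $(a,b)$, then decomposing a geodesic from $u$ to $v$ according to whether it enters the edge through $a$ or $b$ gives $n\leq d_{\Gamma}(u,v)=\min\bigl(d_{\Gamma}(u,a)+t,\,d_{\Gamma}(u,b)+(1-t)\bigr)$ with $t\in(0,1)$, which forces $d_{\Gamma}(u,a)\geq n$ by integrality, so $v^{*}=a$ works (and the symmetric case is identical). The remaining case, with $u$ interior to an edge $e=(c,d)$ and $v$ interior to an edge $f=(a,b)$, is the one I expect to be the main obstacle, and the only place the girth hypothesis is used. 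Here $e\neq f$ and $e,f$ share no vertex (else $d_{\Gamma}(u,v)<2\leq n$), so $a,b,c,d$ are four distinct ``corners''; decomposing geodesics from $u$ to $v$ through the corners shows every corner distance $d_{\Gamma}(c,a),d_{\Gamma}(c,b),d_{\Gamma}(d,a),d_{\Gamma}(d,b)$ exceeds $n-2$, hence is at least $n-1$. If one of them is at least $n$ we are done, so suppose all four equal $n-1$. Then, since $ab$ is an edge, the two geodesics $[c,a]$ and $[c,b]$ of length $n-1$ together with the edge $ab$ contain a simple cycle: deleting the common initial segment of $[c,a]$ and $[c,b]$ up to their last shared vertex $p$ leaves internally disjoint geodesics $[p,a],[p,b]$ of common length $n-1-d_{\Gamma}(c,p)$, which close up through $ab$; one checks $d_{\Gamma}(c,p)\leq n-2$ (else $p=a=b$) and $a\notin[c,b]$, $b\notin[c,a]$ (using $d_{\Gamma}(a,b)=1$ and that these geodesics have length $n-1$), so the resulting cycle is genuine of length $2\bigl(n-1-d_{\Gamma}(c,p)\bigr)+1\leq 2n-1<2n$, contradicting $girth(\Gamma)\geq 2n$. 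Therefore some corner distance is at least $n$, which completes the construction of $u^{*},v^{*}$ and hence the proof.
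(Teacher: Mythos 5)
Your proof is correct: as you observe, the only content of the lemma is upgrading hypothesis (iv) from pairs of vertices to arbitrary pairs of points, and your reduction via endpoints of the edges containing the points --- with integrality handling the vertex/interior case and the girth hypothesis ruling out the case where all four corner distances equal $n-1$ --- does exactly this, and the final step (a vertex cutset separating $u^{*}$ from $v^{*}$ cannot contain them nor any interior point of an edge, so it also separates $u$ from $v$) is sound. Note that this paper does not actually prove the lemma; it is quoted from \cite[Lemma 2.10]{Ashcroft2020LinkCF}, so there is no in-paper argument to compare against, but your route is the natural one. One cosmetic point: the shared vertices of the geodesics $[c,a]$ and $[c,b]$ need not form a common initial segment; the fix is to take $p$ to be a shared vertex at maximal distance from $c$ (any shared vertex occupies the same position on both geodesics, namely its distance from $c$, so by maximality the segments $[p,a]$ and $[p,b]$ are internally disjoint), after which your cycle of length $2(n-1-d(c,p))+1\leq 2n-1$ is genuine; alternatively, one can simply note that the closed walk along $[c,a]$, the edge $ab$, and $[b,c]$ has odd length $2n-1<2n$ and hence contains an odd cycle shorter than the girth.
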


Next we extend these definitions to CAT(0) polygonal complexes. We will say that a polygonal complex is \emph{regular} if all polygonal faces are regular polygons, i.e. for any polygon, all of its internal angles are equal. We first define the following graph, which appears in e.g. \cite{Wise-small-cancellation}.

\begin{definition}[\emph{Antipodal graph}]
Let $X$ be a regular non-positively curved polygonal complex. Subdivide edges in $X$ and add vertices at the midpoints of edges; call these additional vertices \emph{secondary vertices}, and call the other vertices \emph{primary}. Every polygon in $X$ now contains an even number of edges in its boundary. Construct a graph $\Delta_{X}$ as follows. Let $V(\Delta_{X})=V(X)$ and join two vertices $v$ and $w$ by an edge, labelled $f$, if $v$ and $w$ exist and are antipodal in the boundary of a face $f$ in $X$; add as many edges as such faces exist. This is the \emph{antipodal graph} for $X$.
\end{definition}

\begin{remark}
We note that for a secondary vertex $s$ of $X$, $Lk_{X}(s)$ is a cage graph with edges of length $\pi$. Hence, if $X$ does not contain any free faces, $Lk_{X}(s)$ is edge $\pi$-separated, with the single $\pi$-separated cutset $E(Lk_{X}(s))$.

As the complex is regular, the edges of $\Delta_{X}$ pass through the midpoints of edges in $Lk_{X}(v)$ for vertices $v$. We can therefore identify (the midpoints of) edge cutsets in $Lk_{X}(v)$ with sets of edges adjacent to $v$ in $\Delta_{X}$, or, equivalently, with subsets of $V(Lk_{\Delta_{X}}(v))$. We will implicitly make this identification for the remainder of this text.

There is a canonical map $\Delta_{X} \rightarrow X$; we map a vertex $v$ of $\Delta_{X}$ to the corresponding vertex of $X$, and we map an edge $e$ labelled by $f$ to the local geodesic between the endpoints of $e$ lying in the face $f$.
\end{remark}

The following is extremely similar to the `splicing' of Manning \cite{Manning10}; we will use this to build our subcomplexes inductively, ensuring they are separating in a manner similar to \cite{Cashen-Macura2011}. Let $\Delta=\Delta_{X}$ if we are dealing with edge cutsets or $\Delta=X^{(1)}$ if we are dealing with vertex cutsets. Suppose that for each vertex $v$ of $\Delta$, we have found a set of cutsets (each identified with a subset of $V(Lk_{\Delta}(v))$) and associated partitions $\{(C,P_{C,i})\}_{C,i}$. Let
$$\mathcal{CP}_{v}=\{(C,P_{C,i})\;:\;C\mbox{ is a }\pi\mbox{-separated cutset},\;C\subseteq Lk(v),\;\vert C\vert\geq 2\}.$$ For an oriented edge $e$ of $\Delta$, we define
$\mathcal{CP}(e):=\{(C,P)\in \mathcal{CP}_{\iota (e)}\;:\;e\in C\}.$

\begin{definition}[Equatable partitions]
Let $X$ be a non-positively curved polygonal complex, and let $\Delta=\Delta_{X}$ if we are dealing with edge cutsets and $\Delta=X^{(1)}$ if we are dealing with vertex cutsets. Choose $\kappa>0$ so that the length of any edge of $\Delta$ is at least $1000\kappa $ (we can do this as $Shapes(X)$ is finite). Let $v,w$ be two vertices of $\Delta$ connected by an oriented edge $e$, so that $v=\iota (e)$ (i.e. the start vertex of $e$) and $w=\tau (e)$ (i.e. the end vertex of $e$). Let $(C_{v},P_{v})\in \mathcal{CP}(e)$ and $(C_{w},P_{w})\in \mathcal{CP}(e^{-1}).$
 
 Let $v'$, $w'$ be points on $e$ in $X$ in an $\kappa$-neighbourhood of $v$, $w$ respectively, so that there are canonical mappings
 \begin{align*}
         i_{v}&:St(v')\hookrightarrow Lk(v),\\
         i_{w}&:St(w')\hookrightarrow Lk(w),\\
         \phi&:St(v')\xrightarrow{\cong}St(w').
     \end{align*}
 Therefore we have induced mappings
 \begin{align*}
         \overline{i}_{v}&:St(v')-v'\hookrightarrow Lk(v)-C_{v},\\
         \overline{i}_{w}&:St(w')-w'\hookrightarrow Lk(w)-C_{w},\\
         \overline{\phi}&:St(v')-v'\xrightarrow{\cong}St(w')-w'.
     \end{align*}
For $u=v,w$, let $\mathcal{P}_{u}$ be the set of all partitions of $\pi_{0}(Lk(u)-C_{u})$, and let $\mathcal{P}_{u'}$ be the set of all partitions of $\pi_{0}(St(u')-u')$. There are induced maps 
  \begin{align*}
         \iota_{v}&:\mathcal{P}_{v}\rightarrow \mathcal{P}_{v'},\\
         \iota_{w}&:\mathcal{P}_{w}\rightarrow \mathcal{P}_{w'},\\
         \psi&:\mathcal{P}_{v'}\hookdoubleheadrightarrow \mathcal{P}_{w'}.
     \end{align*}
The map $\psi$ exists because the map $\overline{\phi}$ above is an isomorphism. We say that $(C_{v},P_{v})$ and $(C_{w},P_{w})$ are \emph{equatable along $e$}, written $$(C_{v},P_{v})\sim_{e}(C_{w},P_{w}),$$
 if $\psi(\iota_{v}(P_{v}))=\iota_{w}(P_{w}).$ Note that this also defines an equivalence relation on $\mathcal{CP}(e)$; for $(C,P),(C',P')\in \mathcal{CP}(e)$, we write $$(C,P)\approx_{e} (C',P')$$ if $\iota_{v}(P)=\iota_{v}(P').$
We let $[[C,P]]_{e}$ be the equivalence class of $(C,P)$ under the equvalence relation $\approx$. We define $[C,P]_{e^{-1}}$ to be the equivalence class of cutset partitions in $\mathcal{CP}(e^{-1})$ equatable to $(C,P)$ along $e$; by definition this is independent of choice of $(C',P')\in [[C,P]]_{e}$.
\end{definition}
These constructions are designed so that we can `splice' the local cutsets along each edge. Though this definition is somewhat complicated, note the following remark.

\begin{remark}
Let $e,v,w,C_{v},C_{w}$ be as above. If both $C_{v},C_{w}$ are proper with canonical partitions $P_{v},P_{w}$, then $(C_{v},P_{v})\sim_{e}(C_{w},P_{w})$. 
\end{remark}

\begin{definition}[Evenly \emph{$\sigma$-separated}]
Let $X$ be a non positively curved polygonal complex. We call $X$ \emph{evenly edge $\sigma$-separated} (respectively \emph{evenly vertex $\sigma$-separated}) if, letting $\Delta=\Delta_{X}$ (respectively $\Delta=X^{(1)}$):
\begin{enumerate}[label=$\roman*)$]
\item $X$ is regular (respectively $X$ is allowed \textbf{not to be regular})
    \item the link of every vertex in $X$ is edge (respectively vertex) $\sigma$-separated,
    \item for every vertex $v$ of $X$ and every $\sigma$-separated cutset $C$ in $Lk(v)$ there exists a series of partitions $\{P_{i}(C)\}_{i}$ of $\pi_{0}(Lk(v)-C)$ such that for any distinct pair of points $x,y\in Lk(v)$ separated by $C$, $x$ and $y$ are separated by some $P_{i}(C),$  and
    \item there exists an $Aut(X)$-invariant, strictly positive integer solution to the \emph{gluing equations}: we can assign a positive integer $\mu (C,P)$ to every pair $$(C,P)\in\mathcal{CP}:=\bigcup\limits _{e\in E^{\pm}(\Delta )}\mathcal{CP}(e)$$ such that 
    \begin{itemize}
        \item $\mu (C,P)$ is invariant under the action of $Aut(X)$, i.e. for every vertex $v$ of $X$, every $(C,P)\in\mathcal{CP}_{v}$ and $g\in Aut(X)$, we have that $g(C,P)\in\mathcal{CP}_{gv}$ with $\mu(C,P)=\mu (gC,gP)$,
        \item for every edge $e$ of $\Delta$ and every $(C,P)\in \mathcal{CP}(e)$,
\begin{equation*}
	    \sum\limits_{(C',P')\in [[C,P]]_{e}} \mu(C',P')=\sum\limits_{(C',P')\in [C,P]_{e^{-1}}} \mu(C',P'),
	\end{equation*} and
	\item for any $(C,P)\in\mathcal{CP}$ and any edges $e,e'\in C$,
	\begin{equation*}
	    \sum\limits_{(C',P')\in [[C,P]]_{e}} \mu(C',P')=\sum\limits_{(C',P')\in [[C,P]]_{e'}} \mu(C',P').
	\end{equation*}
	\end{itemize}
\end{enumerate}
We call $X$ \emph{evenly $\sigma$-separated} if it is either evenly vertex or edge $\sigma$-separated.
\end{definition}

As we will see, these cutsets and integer weights may be found by computer search; this is simplified greatly if $Aut(\Gamma)$ acts transitively on edges or vertices (as appropriate).

\section{Spaces with measured walls and the Haagerup property}\label{measured wallspaces}

In order to prove Theorem \ref{mainthm: link conditions for the Haagerup property}, we turn to the definition of a space with measured walls, as introduced in \cite{Cheriz-Martin-Valette_haagerupproperty}. In our setting we will restrict to considering only proper metric spaces, i.e. ones in which closed balls are compact. However, the results of \cite{Cheriz-Martin-Valette_haagerupproperty} are far more general.
\begin{definition}[\emph{Spaces with measured walls}]
    Let $(X,d)$ be a proper metric space. A \emph{wall} is a pair $W=\{U,V\}$, with $X=U\sqcup V$ a partition of $X$. The \emph{halfspaces} of $W$ are the sets $U$ and $V$. We say a wall \emph{separates} two points $u$ and $v$ if $u$ lies in $U$ and $v$ lies in $V$, or vice versa.
    
 A \emph{space with measured walls} is a tuple $((X,d),\mathcal{W},\mathcal{B},\nu)$, where $(X,d)$ is a proper metric space, $\mathcal{W}$ is a collection of walls in $X$, $\mathcal{B}$ is a $\sigma$-algebra on $\mathcal{W}$, $(\mathcal{W},\mathcal{B},\nu)$ is a measure space, and for any points $x,y\in X$, the set, $\omega (x,y)$, of walls separating $x$ and $y$ lies in $\mathcal{B}$, with $\nu(\omega (x,y))<\infty .$
\end{definition}
Given a space with measured walls, the map $d_{\nu}:(x,y)\mapsto \nu(\omega (x,y))$ is a pseudo-metric. We say that a group $G$ acts \emph{properly} on the space with measured walls if $G$ acts properly on $(X,d_{\nu}).$ The use of this is the following.
\begin{theorem*}\cite[Proposition 1]{Cheriz-Martin-Valette_haagerupproperty}
Let $G$ be a locally-compact group acting properly on a space with measured walls $(X,\mathcal{W},\mathcal{B},\nu)$. Then $G$ has the Haagerup property.
\end{theorem*}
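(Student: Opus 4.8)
The plan is to extract from the measured wall structure a proper, continuous, conditionally negative definite function on $G$, and then invoke the standard characterization of the Haagerup property for (second countable) locally compact groups: a group has the Haagerup property precisely when it admits such a function, equivalently a proper continuous affine isometric action on a Hilbert space. Throughout it is understood that an action of $G$ on a space with measured walls is one for which $G$ acts on $X$ by bijections carrying $\mathcal{W}$, $\mathcal{B}$ and $\nu$ to themselves; in particular a wall $W$ separates $x,y$ if and only if $gW$ separates $gx,gy$, so $\nu$ is $G$-invariant and the \emph{wall kernel} $K(x,y):=\nu(\omega(x,y))$ is $G$-invariant.

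First I would fix a basepoint $x_{0}\in X$ and set $\Psi(g):=d_{\nu}(gx_{0},x_{0})=K(gx_{0},x_{0})$. Then $\Psi(e)=0$, $\Psi(g)<\infty$ by the definition of a space with measured walls, and $\Psi(g^{-1})=K(g^{-1}x_{0},x_{0})=K(x_{0},gx_{0})=\Psi(g)$ using $G$-invariance and symmetry of $K$. Properness of $\Psi$ --- that $\{g\in G:\Psi(g)\le R\}$ is relatively compact for every $R>0$ --- is immediate from the hypothesis that $G$ acts properly on $(X,d_{\nu})$, since this is exactly the assertion that the orbit map $g\mapsto gx_{0}$ is metrically proper.

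The heart of the argument is that $K$ is conditionally negative definite on $X$. Given $x_{1},\dots,x_{n}\in X$ and reals $\lambda_{1},\dots,\lambda_{n}$ with $\sum_{i}\lambda_{i}=0$, we write
\[
\sum_{i,j}\lambda_{i}\lambda_{j}K(x_{i},x_{j})=\int_{\mathcal{W}}\Bigl(\sum_{i,j}\lambda_{i}\lambda_{j}\,\mathbf{1}_{\omega(x_{i},x_{j})}(W)\Bigr)\,d\nu(W),
\]
which is legitimate since each $\omega(x_{i},x_{j})\in\mathcal{B}$ and the integrand is bounded. For a fixed wall $W=\{U,V\}$ set $S=\{i:x_{i}\in U\}$; since $\mathbf{1}_{\omega(x_{i},x_{j})}(W)=1$ exactly when $x_{i}$ and $x_{j}$ lie on opposite sides of $W$, the bracketed term equals $2\bigl(\sum_{i\in S}\lambda_{i}\bigr)\bigl(\sum_{i\notin S}\lambda_{i}\bigr)=-2\bigl(\sum_{i\in S}\lambda_{i}\bigr)^{2}\le 0$, using $\sum_{i}\lambda_{i}=0$. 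Hence the integrand is $\nu$-a.e.\ nonpositive and the integral is $\le 0$. Since $K$ is $G$-invariant it follows that $\Psi$ is conditionally negative definite on $G$: for $g_{1},\dots,g_{n}\in G$ and $\sum_{i}\lambda_{i}=0$ one has $\sum_{i,j}\lambda_{i}\lambda_{j}\Psi(g_{i}^{-1}g_{j})=\sum_{i,j}\lambda_{i}\lambda_{j}K(g_{i}x_{0},g_{j}x_{0})\le 0$. (Keeping closer to the wall picture, one may build the Hilbert space explicitly as $L^{2}$ of the orientation double cover $\widetilde{\mathcal{W}}=\{(W,W^{+}):W\in\mathcal{W},\ W^{+}\text{ a halfspace of }W\}$, equipped with a suitably normalised lift of $\nu$, and set $\beta(x)=\mathbf{1}_{\{(W,W^{+})\,:\,x\in W^{+}\}}-\mathbf{1}_{\{(W,W^{+})\,:\,x_{0}\in W^{+}\}}$; this lies in $L^{2}$ because $\|\beta(x)\|^{2}=K(x,x_{0})<\infty$, it satisfies $\|\beta(x)-\beta(y)\|^{2}=K(x,y)$, and the natural unitary $G$-action on $L^{2}(\widetilde{\mathcal{W}})$ together with the translates of $\beta$ assemble into an affine isometric action $\alpha$ with cocycle $g\mapsto\beta(gx_{0})$ of squared norm $\Psi(g)$, which is proper exactly when the action on $(X,d_{\nu})$ is.)

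Finally I would invoke the characterization above --- via Schoenberg's theorem, passing from a proper continuous conditionally negative definite function to a proper continuous affine isometric action on a Hilbert space --- applied to $\Psi$ (equivalently to $\alpha$), which yields the Haagerup property. The one point not settled by the combinatorial data alone is continuity of $\Psi$, equivalently strong continuity of the linear part of $\alpha$ and continuity of its cocycle; I expect this, rather than the kernel computation, to be the main obstacle in a fully rigorous treatment. It must be extracted from a regularity hypothesis implicit in the notion of a $G$-action on a space with measured walls --- for instance measurability of $g\mapsto\mathbf{1}_{\omega(gx_{0},x_{0})}$ as an $L^{2}$-valued map --- combined with the automatic-continuity principle for measurable, locally bounded negative definite functions (and measurable unitary representations) on second countable locally compact groups.
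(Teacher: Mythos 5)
Your proposal is correct and follows essentially the same route the paper attributes to the cited result: the paper gives no proof of its own (it quotes Proposition 1 of Ch\'erix--Martin--Valette), but the remark immediately after the statement sketches exactly your argument --- the wall kernel $g\mapsto d_{\nu}(x,gx)$ is conditionally negative definite and continuous, and properness (or unboundedness, via the Akemann--Walter characterization) then yields the Haagerup property. Your flagged concern about continuity is consistent with how the paper handles it, namely as an extra hypothesis verified separately (Lemma \ref{lem: hypergraph wallspace is measured}) rather than something automatic from the wall structure.
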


\begin{remark}
We note that there could be some confusion regarding the definition of a group $G$ acting properly on $(X,d_{\nu})$. Indeed, since the space $(X,d_{\nu})$ will not necessarily be Hausdorff, the differing definitions of $G$ acting properly on $(X,d_{\nu})$ need not coincide. Of course all definitions of a proper action are equivalent for the action of $G$ on $(X,d)$, since $X$ is proper and Hausdorff. 

For our purpose, the definition of \emph{Palais proper} will be most useful, i.e. for any $x\in X$ there exists $U_{x}\ni x$ an open neighbourhood such that for any $y\in X$ there exists $V_{y}\ni y$ an open neighbourhood so that the subgroup $$G(U_{x}\;\vert\; V_{y}):=\{g\in G\;:\;gU_{x}\cap V_{y}\neq \emptyset\}$$ has compact closure. The other definition that will be useful is that of \emph{Borel proper}; for any compact subsets $K,L\subseteq X$ the group $G(K\;\vert\;L)$ is compact.

Indeed, any of the definitions of a proper action of $G$ on $(X,d_{\nu})$ will suffice, as follows. The main idea of \cite{Cheriz-Martin-Valette_haagerupproperty} is that the function $\psi:G\rightarrow \mathbb{R}_{\geq 0}$ which maps, for some fixed choice of $x\in X$, $g\mapsto d_{\nu}(x,gx)$, is continuous and \emph{conditionally negative definite}. By \cite{Akemann_Martin_Unbounded}, $G$ has the Haagerup property if $\psi$ is unbounded, i.e. $\lim_{g\rightarrow \infty}\psi(g)=\infty$. Of course, any reasonable definition of a proper action of $G$ on $(X,d_{\nu})$ will guarantee this.
\end{remark}

\begin{definition}
We say a space with measured walls $((X,d),\mathcal{W},\mathcal{B},\nu)$ is \emph{$f$-separated} if there exists an injective non-decreasing function $f:\mathbb{R}_{\geq 0}\rightarrow \mathbb{R}_{\geq 0}$ such that for any $x,y\in X$:
$$d(x,y)\leq f(d_{\nu}(x,y)).$$
\end{definition}
Using this, we can prove the following. Recall that for $K,L\subseteq X$, we let $G(K\;\vert\; L):=\{g\in G\;:\;gK\cap L\neq \emptyset\}.$

\begin{lemma}\label{lem: measured wallspace f-separated implies Haagerup}
Let $(X,d)$ be a proper metric space and $((X,d),\mathcal{W},\mathcal{B},\nu)$ a space with measured walls that is $f$-separated. Suppose that: $G$ acts on $(X,d)$ properly; $\nu$ is $G$-invariant; and $G$ acts continuously on the pseudo-metric space $(X,d_{\nu})$. Then $G$ has the Haagerup property.
\end{lemma}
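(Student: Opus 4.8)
The plan is to reduce the statement to the criterion recalled just above, namely that a locally-compact group acting properly on a space with measured walls has the Haagerup property. We already have a space with measured walls $((X,d),\mathcal{W},\mathcal{B},\nu)$ and a group $G$ acting on $(X,d)$ properly, with $\nu$ being $G$-invariant, so the pseudo-metric $d_\nu$ is $G$-invariant as well. The only thing left to verify is that $G$ acts \emph{properly} on $(X,d_\nu)$, in one of the equivalent senses discussed in the remark (it suffices to obtain, say, Borel properness, since any reasonable notion of properness implies $\psi:g\mapsto d_\nu(x,gx)$ is unbounded, which together with continuity of the action on $(X,d_\nu)$ — assumed — and conditional negative definiteness of $\psi$ yields the Haagerup property via \cite{Akemann_Martin_Unbounded, Cheriz-Martin-Valette_haagerupproperty}). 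So the real content is: properness of the $G$-action on $(X,d)$ plus the $f$-separation inequality $d(x,y)\le f(d_\nu(x,y))$ forces properness of the $G$-action on $(X,d_\nu)$.

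First I would fix a basepoint $x_0\in X$ and show $\psi(g)=d_\nu(x_0,gx_0)\to\infty$ as $g\to\infty$ in $G$. Suppose not: then there is a sequence $g_n\to\infty$ in $G$ (i.e. eventually leaving every compact subset) with $d_\nu(x_0,g_nx_0)$ bounded, say by $R$. By the $f$-separation hypothesis, $d(x_0,g_nx_0)\le f(d_\nu(x_0,g_nx_0))\le f(R)$ since $f$ is non-decreasing; hence all the points $g_nx_0$ lie in the closed ball $\overline{B}_d(x_0,f(R))$, which is compact because $(X,d)$ is proper. Thus the $g_n$ all lie in $G(\{x_0\}\,\vert\,\overline{B}_d(x_0,f(R)))$, which has compact closure because $G$ acts properly on $(X,d)$ (here I am using Borel properness, or Palais properness applied with a compact neighbourhood of $x_0$, whichever is most convenient — they all deliver this). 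This contradicts $g_n\to\infty$. Hence $\psi$ is unbounded.

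Then I would invoke the machinery of \cite{Cheriz-Martin-Valette_haagerupproperty}: associated to the space with measured walls and the $G$-invariant measure $\nu$, the function $\psi:G\to\mathbb{R}_{\ge 0}$, $g\mapsto d_\nu(x_0,gx_0)$, is conditionally negative definite, and it is continuous because $G$ acts continuously on $(X,d_\nu)$. A continuous, conditionally negative definite, unbounded function on a locally-compact group $G$ witnesses the Haagerup property by \cite{Akemann_Martin_Unbounded}; alternatively, one checks directly that the unboundedness of $\psi$ upgrades to properness of the action on $(X,d_\nu)$ in the Borel sense and then applies \cite[Proposition 1]{Cheriz-Martin-Valette_haagerupproperty} verbatim. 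Either route concludes the proof.

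The main obstacle — really the only subtle point — is being careful about \emph{which} notion of properness is in play on the non-Hausdorff space $(X,d_\nu)$, and making sure the implication "properness on $(X,d)$ $+$ $f$-separation $\Rightarrow$ the right conclusion on $(X,d_\nu)$" is drawn with a notion strong enough to feed into \cite{Cheriz-Martin-Valette_haagerupproperty} yet weak enough to actually follow from the hypotheses. The remark preceding the lemma has essentially set this up: it is enough that $\lim_{g\to\infty}\psi(g)=\infty$, and the argument above gives exactly that, so the pseudo-metric pathology never causes a genuine problem. Everything else (conditional negative definiteness of $\psi$, continuity) is either assumed in the hypotheses or quoted from \cite{Cheriz-Martin-Valette_haagerupproperty}.
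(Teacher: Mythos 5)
Your proposal is correct, but it finishes the argument along a slightly different route than the paper. The paper's proof stays entirely inside the properness framework: from $d(x,y)\leq f(d_{\nu}(x,y))$ and monotonicity of $f$ it deduces an inclusion of pseudo-metric balls into metric balls (so that $B_{d_{\nu}}(x,r)$ sits inside a $d$-ball with compact closure), hence $G(B_{d_{\nu}}(x,r)\;\vert\;B_{d_{\nu}}(y,R))$ is contained in $G(\overline{B},\overline{B'})$ for compact balls, which is compact since $G$ acts properly on the proper space $(X,d)$; this gives Palais properness of the action on $(X,d_{\nu})$ and the lemma follows verbatim from \cite[Proposition 1]{Cheriz-Martin-Valette_haagerupproperty}. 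You instead prove that the length function $\psi(g)=d_{\nu}(x_{0},gx_{0})$ is proper (a sequence argument using exactly the same ingredients: $f$-separation, compactness of closed $d$-balls, properness of the action on $(X,d)$) and then invoke conditional negative definiteness of $\psi$ from \cite{Cheriz-Martin-Valette_haagerupproperty} together with \cite{Akemann_Martin_Unbounded}; this is precisely the alternative endgame that the remark preceding the lemma licenses, so it is sound. What each route buys: the paper's version is a two-variable statement (it controls $G(B_{d_{\nu}}(x,r)\;\vert\;B_{d_{\nu}}(y,R))$ for all pairs, giving genuine Palais properness of the action, not just properness of one orbit map) and cites a single black box, whereas yours is more elementary in topology (one basepoint, one sequence) but leans on the continuous-proper-conditionally-negative-definite characterization, which implicitly uses the $\sigma$-compactness/second countability that the paper has arranged; your closing observation that unboundedness could alternatively be upgraded to Borel properness brings the two arguments back together. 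One cosmetic remark: the paper writes the ball inclusion with $f^{-1}(r)$, while your use of $d_{\nu}(x_{0},g_{n}x_{0})\leq R\Rightarrow d(x_{0},g_{n}x_{0})\leq f(R)$ is the cleaner way to exploit that $f$ is non-decreasing; the substance is the same.
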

\begin{proof}
We only need to show that the action on the pseudo-metric space $(X,d_{\nu})$ is Palais proper. 
For $x\in X$ and $r>0$, let $B(x,r)$ be the open ball around $x$ of distance $r$ under $d$.
Since $((X,d),\mathcal{W},\mathcal{B},\nu ) $ is $f$-separated, for any   points $x,y\in X$; $f(d_{\nu}(x,y))\geq d(x,y). $
Therefore, we see that for any $x\in X$ and $r>0$, $B_{d_{\nu}}(x,r)\subseteq B(x,f^{-1}(r)).$ Hence, for any $r,R>0$ and $x,y\in X$:
\begin{align*}
    G(B_{d_{\nu}}(x,r)\;\vert\; B_{d_
{\nu}}(y,R))&\leq G(B(x,f^{-1}(r))\;\vert\; B(y,f^{-1}(R))))\\
&\leq G(\overline{B(x,f^{-1}(r))}\;\vert\; \overline{B(y,f^{-1}(R))}).
\end{align*}
Since $X$ is a proper metric space and $G$ acts properly on $(X,d)$, the subgroup $$G(\overline{B(x,f^{-1}(r))}\;\vert\; \overline{B(y,f^{-1}(R))})$$ is compact, meaning that $G(B_{d_{\nu}}(x,r)\;\vert\; B_{d_{\nu}}(y,R))$ has compact closure. Therefore, $G$ acts on $(X,d_{\nu})$ Palais properly, and the result follows by \cite[Proposition 1]{Cheriz-Martin-Valette_haagerupproperty}.\end{proof}

\section{Aperiodic hypergraphs in \texorpdfstring{$\mathbf{\pi}$}{pi}-separated polygonal complexes and spaces with measured walls}\label{section: Constructing hypergraphs in polygonal complexes}In this section we use our local conditions to build a space with measured walls on which the group $Aut(X)$ acts.
\subsection{Aperiodic hypergraphs}
We now begin to construct our separating subcomplexes, and then define the elements of our set of walls.

\begin{definition}[Hypergraph stars and aperiodic hypergraphs]
Let $X$ be a CAT(0) polygonal complex satisfying the conditions of Theorem \ref{mainthm: link conditions for the Haagerup property}, and let $\Delta=\Delta_{X}$ if we are considering edge cutsets and $\Delta=X^{(1)}$ if we are dealing with vertex cutsets. Let $v$ be a vertex of $X$ and $C\subseteq E(Lk_{X}(v))$ (respectively $C\subseteq V(Lk_{X}(v))$) a $\pi$-separated cutset. Let $C'$ be the set of closed edges in $\Delta$ corresponding to $C$. The \emph{hypergraph star map at $v$}, $\phi_{hyp,v}:E(Lk_{X}(v))\rightarrow X$ (respectively $\phi_{hyp,v}:V(Lk_{X}(v))\rightarrow X$), is the map taking $C$ to the image of $C'$ under the map $\Delta\rightarrow X.$

We have assumed that $X$ satisfies the condition of Theorem \ref{mainthm: link conditions for the Haagerup property}. For each vertex $v$, let $\{(C,P_{C,i})\}_{C,i}$ be the required set of cutsets and associated partitions. Recall that we let
$$\mathcal{CP}_{v}=\{(C,P_{C,i})\;:\;C\mbox{ is a }\pi\mbox{-separated cutset},\;C\subseteq Lk(v),\;\vert C\vert\geq 2\}.$$ For $e$ an edge we defined
$\mathcal{CP}(e):=\{(C,P)\in \mathcal{CP}_{\iota (e)}\;:\;e\in C\},$
and  $$\mathcal{CP}=\bigcup_{e\in E^{\pm }(\Delta)}\mathcal{CP}(e).$$
By assumption $\mathcal{CP}_{v}$ is non-empty for all $v\in X^{(0)}$.
Let $\mathcal{CP}_{v}':=\mathcal{CP}_{v}\sqcup \{(\emptyset,\emptyset )\}.$
We define the space
$$\mathcal{S}:=\prod\limits_{v\in V(X)}\mathcal{CP}_{v}',$$
and the subspace 
$$\mathcal{T}:=\left\{(C_{v},P_{v})\}_{v}\in \mathcal{S}\;:\;\forall\;e\in E(\Delta)\mbox{ either }\; \begin{matrix}
(C_{\iota(e)},P_{\iota(e)})\sim_{e}(C_{\tau(e)},P_{\tau(e)})\mbox{, or}\\
e\notin C_{\iota(e)},\;e^{-1}\notin C_{\tau(e)}
\end{matrix}
\right\}.$$

The \emph{trivial hypergraph sequence} is the sequence $\vect{\epsilon}:=(\emptyset,\emptyset )_{v}$. We further define
$$\mathcal{H}=\mathcal{T}-\vect{\epsilon}.$$

The map $\phi_{hyp,v}$ on each vertex extends to a map $\phi_{hyp}:\mathcal{T}\rightarrow X$. An \emph{aperiodic hypergraph} is the image $\phi_{hyp}(\vect{x})$ of an element $\vect{x}\in\mathcal{H}.$ We typically write $\Lambda_{\vect{x}}:=\phi_{hyp}(\vect{x}).$ We say $\Lambda_{\vect{x}}$ \emph{passes through} the pair $\vect{x}_{v}$ at the vertex $v$. We call $\Lambda$ an \emph{aperiodic edge hypergraph} if it is composed of edge cutsets, and an \emph{aperiodic vertex hypergraph} otherwise.
\end{definition}
We call these hypergraphs aperiodic to distinguish them from the complexes constructed in \cite{Ashcroft2020LinkCF}, the key distinction between these is that the latter have subgroups $H\leq Aut(X)$ acting properly discontinuously and cocompactly on them.

It is easily seen that $\mathcal{H}$ is non-empty, and furthermore, for each $(C,P)\in \mathcal{CP}_{v}$ there exists $\vect{x}\in\mathcal{H}$ with $\vect{x}_{v}=(C,P).$ Importantly, for each vertex $v$ and for each pair $(C,P)\in \mathcal{CP}_{v}$, there exists an aperiodic hypergraph passing through $(C,P)$. We now note that hypergraphs are leafless trees.
\begin{lemma}\label{lem: edge hypergraphs are leafless convex}
	Let $X$ be a polygonal complex satisfying the conditions of Theorem \ref{mainthm: link conditions for the Haagerup property}, and let $\Lambda$ be an aperiodic hypergraph in $X$. Then $\Lambda$ is a leafless convex tree.
\end{lemma}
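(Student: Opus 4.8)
The plan is to prove the three assertions — that $\Lambda_{\vect{x}}$ is connected (hence, since each local piece in a face is a geodesic segment, a tree once we check there are no cycles), that it has no leaves, and that it is convex in the CAT(0) metric — more or less in that order, exploiting the $\pi$-separation hypothesis at every step. First I would unwind the definition: $\Lambda_{\vect{x}}$ is built from the local pieces $\phi_{hyp,v}(C_v)$ at each vertex $v$ through which $\vect{x}$ passes, where $C_v$ is a $\pi$-separated cutset (edge or vertex) in $Lk_X(v)$, and these pieces are glued along edges of $\Delta$ via the equatability relation $\sim_e$ built into the definition of $\mathcal{T}$. The key geometric input is that, at a primary vertex $v$, the edges of $\Delta$ through $v$ lying in $\Lambda$ correspond to points of $C_v\subseteq Lk_X(v)$, and $\pi$-separation says any two distinct such points are at angular distance $\geq\pi$ in $Lk_X(v)$; at a secondary vertex the link is a cage with all edges of length $\pi$, so the same lower bound holds automatically. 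This means that where two arcs of $\Lambda$ meet at a vertex, they make an angle of at least $\pi$, i.e. they actually form a local geodesic through that vertex.

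For \textbf{leaflessness}: a leaf of $\Lambda$ would be a vertex $v$ at which only one edge of $\Lambda$ is incident, i.e. $|C_v|=1$ locally; but $\mathcal{CP}_v$ only contains cutsets with $|C|\geq 2$, and the equatability condition defining $\mathcal{T}$ forces that if an edge $e$ of $\Delta$ with $e\in C_{\iota(e)}$ belongs to $\Lambda$, then $e^{-1}\in C_{\tau(e)}$ and the cutset $C_{\tau(e)}$ also has $\geq 2$ elements, so $\Lambda$ continues past $\tau(e)$. Thus every vertex of $\Lambda$ has degree $\geq 2$ and $\Lambda$ is leafless.

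For \textbf{convexity and the tree property}: I would show that any edge-path in $\Lambda$ (a concatenation of the geodesic segments living in successive faces, crossing vertices of $\Delta$) is in fact a local geodesic in $X$. At a secondary vertex this is the cage-link observation; at a primary vertex $v$ it is exactly the statement that the two incident $\Delta$-edges correspond to points of $C_v$ at angular distance $\geq\pi$ in $Lk_X(v)$, which is the definition of a local geodesic through $v$ in a CAT(0) (in particular, locally CAT(0)) complex. Since $X$ is CAT(0), local geodesics are global geodesics, so such a path is a geodesic; in particular it is injective, so $\Lambda$ contains no embedded cycles, and being connected (which follows because $\vect{x}\in\mathcal H$ is a single connected object — any two pieces are joined through a chain of $\Delta$-edges via the $\sim_e$ gluing) it is a tree. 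Convexity then follows: given two points of $\Lambda$, the unique $X$-geodesic between them is realized by a path in $\Lambda$ (the tree-geodesic), which by the above is already a geodesic in $X$; uniqueness of CAT(0) geodesics shows the $X$-geodesic lies in $\Lambda$.

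The main obstacle I expect is the connectedness/tree bookkeeping rather than the convexity: one must argue carefully that $\phi_{hyp}$ applied to an element of $\mathcal H$ yields a single connected subcomplex and that the local-to-global geodesic argument is not disrupted at the transition between primary and secondary vertices, nor by the possibility that $\Lambda$ passes through a given edge of $\Delta$ with multiplicity coming from several faces — here one uses regularity of $X$ (so that antipodal pairs in faces are genuinely geodesically opposite) together with the $\pi$-separation to keep every vertex crossing geodesic. Once the angle-$\geq\pi$ condition is in hand at every vertex of $\Lambda$, the CAT(0) hypothesis does the rest via the standard fact that local geodesics are global geodesics in CAT(0) spaces.
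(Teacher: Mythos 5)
Your argument is essentially the paper's own proof, written out in more detail: the paper likewise observes that $\pi$-separation of the cutsets makes $\Lambda$ locally geodesic, that in the CAT(0) space $X$ local geodesics are (unique) global geodesics, so $\Lambda$ is a convex tree, and that leaflessness follows from $\vert C\vert\geq 2$ for every cutset in $\mathcal{CP}_{v}$. The one place you go beyond the paper is the connectedness discussion, where your justification (that any two nonempty local pieces of an element of $\mathcal{H}$ are joined by a chain of $\Delta$-edges via the gluing) is not actually forced by the definition of $\mathcal{T}$; the paper's proof does not address this point at all, so on every step the paper carries out your route coincides with it.
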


\begin{proof}
	As the cutsets are $\pi$-separated, $\Lambda$ is locally geodesic in $X$. As $X$ is CAT(0), local geodesics are geodesic, and geodesics are unique, so that $\Lambda$ is a convex tree.
	Since $\vert C\vert\geq 2$ for any $v\in V(\Delta)$ and $(C,P)\in\mathcal{CP}_{v}$, $\Lambda$ contains no vertices of degree $1$, and hence is leafless.
\end{proof}
	\begin{definition}
		 Let $\Lambda$ be an aperiodic hypergraph in $X$ and $x,y\in X$ be distinct points in $X$. We say $\Lambda$ \emph{separates} $x$ and $y$ if $x$ and $y$ lie in distinct components of $X-\Lambda$.
	\end{definition}
	
	We now consider separating points; we note the following lemma.
	We call a path $\gamma$ \emph{transverse} to $\Lambda$ if $\vert \gamma\cap \Lambda\vert =1$. If $x$ is a point on an edge $e$ of $\Lambda$, then there is a canonical partition of $Lk(x)-\Lambda$ obtained from the partitions of $Lk(v)-\Lambda$ and $Lk(w)-\Lambda$, where $v,w$ are the endpoints of $\Lambda$ (since these are equatable along $e$ the induced partitions are the same).
	\begin{lemma}\cite[Lemma 2.27]{Ashcroft2020LinkCF}\label{lem: separation condition}
	    Let $\Lambda$ be an aperiodic hypergraph in $X$, and $\gamma=[p,q]$ be a geodesic transverse to $\Lambda$. If $\gamma\cap\Lambda=\{x\}$ and $p$ and $q$ lie in different elements of the partition of $Lk(x)-\Lambda$, then $p$ and $q$ lie in different components of $X-\Lambda$.
	\end{lemma}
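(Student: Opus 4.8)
\textbf{Proof plan for Lemma \ref{lem: separation condition}.}

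The plan is to show the contrapositive-style statement directly: given a transverse geodesic $\gamma = [p,q]$ meeting $\Lambda$ in the single point $x$, with $p$ and $q$ in different elements of the partition of $Lk(x)-\Lambda$, any path $\delta$ from $p$ to $q$ in $X$ must cross $\Lambda$, i.e. $\delta \cap \Lambda \neq \emptyset$. Since $\Lambda$ is a closed subset of $X$ (it is a convex tree by Lemma \ref{lem: edge hypergraphs are leafless convex}, hence closed in the proper CAT(0) space $X$), this shows $p$ and $q$ lie in different path-components — equivalently different components — of $X - \Lambda$.

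First I would reduce to a local statement near $x$. The point $x$ lies on an edge $e$ of $\Lambda$ labelled by some face $f$, and by the construction of aperiodic hypergraphs the cutset data at the two endpoints $v,w$ of $e$ are equatable along $e$; this is exactly what makes the partition of $Lk(x) - \Lambda$ well-defined, as recalled in the paragraph preceding the lemma. Concretely, $Lk(x)$ is a graph (a suspension-type link since $x$ is in the interior of an edge of $X$ after subdivision, or an interior point of an edge of $\Delta$), and $\Lambda$ meets $Lk(x)$ in a cutset whose complement $Lk(x) - \Lambda$ breaks into the prescribed partition classes. The hypothesis says the germ of $\gamma$ at $x$ entering toward $p$ and the germ entering toward $q$ land in distinct partition classes of $Lk(x) - \Lambda$.

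Next, suppose for contradiction that some path $\delta$ from $p$ to $q$ avoids $\Lambda$ entirely. Consider the straight-line CAT(0) geodesic homotopy, or more simply argue via the local structure: take a small metric ball $B(x,\rho)$ small enough that $\Lambda \cap B(x,\rho)$ is a cone on the cutset in $Lk(x)$ and that $B(x,\rho) - \Lambda$ is a disjoint union of the open ``sectors'' indexed by $\pi_0(Lk(x)-\Lambda)$, one sector for each partition class once we take the partition-coarsening (this uses that the partition is at least as coarse as connectivity in $Lk(x) - C$). The points of $\gamma$ just before and just after $x$ lie in two different such sectors, say $S_p$ and $S_q$. Now $p \in S_p$-component and $q \in S_q$-component of $X - \Lambda$ locally; since $\delta$ is connected and avoids $\Lambda$, $\delta$ stays in one component of $X-\Lambda$, so $p$ and $q$ are in the same component. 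But then the two sectors $S_p$ and $S_q$ are joined by a path in $X - \Lambda$, contradicting the fact that $\Lambda$ separates these sectors locally — and here I would invoke that $\Lambda$ is a \emph{convex} subset of the CAT(0) space $X$, so that the sectors determined by the partition classes are genuinely distinct components of $X - \Lambda$, not merely distinct locally. This is where convexity does the real work: in a general complex a hypergraph could ``wrap around'' and reconnect its sides, but a convex tree cannot, because any path between two sides could be tightened to a geodesic that must pass through the convex set.

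The main obstacle I anticipate is the last step: promoting the \emph{local} separation at $x$ (controlled by the partition of $Lk(x) - \Lambda$) to \emph{global} separation in $X - \Lambda$. For this I would lean on a Jordan-separation / elementary algebraic-topology argument in the spirit of how such hypergraph-separation lemmas are proved in \cite{Ashcroft2020LinkCF} and in Wise's small-cancellation work \cite{Wise-small-cancellation}: a two-sided (the partition has the two germs of $\gamma$ on opposite sides), leafless, convex codimension-one-like subcomplex of a CAT(0) space separates. One clean way to package it: define a locally constant $\mathbb{Z}/2$-valued ``side function'' on $X - \Lambda$ near $\Lambda$ by recording which partition-class side a point is on, check it extends to a well-defined locally constant function on all of $X - \Lambda$ using that $\Lambda$ is a tree (so there are no ``monodromy'' loops around $\Lambda$), and observe it takes different values at $p$ and $q$; a locally constant function taking two values certifies disconnectedness. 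Verifying the side function is genuinely globally well-defined is the crux and is exactly where convexity and leaflessness (Lemma \ref{lem: edge hypergraphs are leafless convex}) are used. Since this is quoted as \cite[Lemma 2.27]{Ashcroft2020LinkCF}, I would in fact cite that proof and only indicate the adaptation needed for aperiodic hypergraphs — namely that the partition of $Lk(x) - \Lambda$ is well-defined, which follows from equatability along $e$.
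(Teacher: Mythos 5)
First, a point of comparison: the paper does not reprove this statement. Lemma \ref{lem: separation condition} is quoted verbatim from \cite[Lemma 2.27]{Ashcroft2020LinkCF}, so there is no in-paper proof to measure your sketch against, and your closing suggestion --- cite the original proof and only verify that the partition of $Lk(x)-\Lambda$ is well defined via equatability along the edge through $x$ --- is exactly what the paper itself does.

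As a standalone argument, however, your sketch has a genuine gap at its central step. Where you ``invoke that $\Lambda$ is a convex subset of the CAT(0) space $X$, so that the sectors determined by the partition classes are genuinely distinct components of $X-\Lambda$,'' you are assuming precisely the conclusion of the lemma. The supporting remark --- ``any path between two sides could be tightened to a geodesic that must pass through the convex set'' --- does not repair this: tightening a path from $p$ to $q$ with endpoints fixed yields the unique CAT(0) geodesic $[p,q]=\gamma$, which crosses $\Lambda$ by hypothesis, and the fact that the geodesic between two points meets $\Lambda$ says nothing about whether some other path avoids $\Lambda$ (a single convex geodesic segment in the Euclidean plane is crossed transversally by many geodesics yet separates nothing). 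Hence convexity alone can never give local-to-global separation; leaflessness (Lemma \ref{lem: edge hypergraphs are leafless convex}) and the fact that $\Lambda$ is assembled from $\pi$-separated cutsets with $\vert C\vert\geq 2$ and equatable partitions must enter the actual argument. Your ``side function'' paragraph is the right shape --- one wants a locally constant function on $X-\Lambda$, e.g.\ via nearest-point projection to the closed convex tree $\Lambda$ together with the partition class of the incoming direction, which distinguishes $p$ from $q$ --- but checking local constancy where the projection passes over a vertex of $\Lambda$ (this is exactly where equatability is used) and where the projection point moves along $\Lambda$ without backtracking (this is where leaflessness and $\pi$-separation are used) is the entire content of the lemma, and your plan leaves that verification unexecuted except by deferring to \cite{Ashcroft2020LinkCF}. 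A minor further point: the side function should take values in the set of partition classes rather than $\mathbb{Z}/2$, since $Lk(x)-\Lambda$ may be cut into more than two classes; this does not affect the strategy, as any locally constant function separating the two relevant classes suffices.
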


\subsection{The \texorpdfstring{$\boldsymbol{\sigma}$}{sigma}-algebra on walls}
We now need to show that we can use aperiodic hypergraphs to build a space with measured walls.

\begin{definition}[\emph{$\Lambda$ walls}]
    Let $\Lambda$ be an aperiodic hypergraph in $X$, with disjoint components $X-\Lambda=\{U^{i}_{\Lambda}\}_{i}$. For each $U_{\Lambda}^{i}$, let $V_{\Lambda}^{i}=X-\overline{U_{\Lambda}^{i}}$.
The \emph{set of $\Lambda$ walls} is the set

$$\mathcal{W}_{\Lambda}=\bigg\{\{\overline{U_{\Lambda}^{i}},V_{\Lambda}^{i}\}\;:\;U_{\Lambda}^{i}\mbox{ a component of }X-\Lambda\bigg\}.$$
The \emph{set of hypergraph walls} is the set of walls 
$$\mathcal{W}=\bigsqcup\limits_{\vect{x}\in\mathcal{H}}\mathcal{W}_{\Lambda_{\vect{x}}},$$
where we \emph{do not} remove any duplicate walls, i.e. we mark each wall with the hypergraph it corresponds to. Note the trivial hypergraph does not appear in the above union.
\end{definition}

We do however need to add the trivial hypergraph, so we add $$\mathcal{V}=\mathcal{W}\sqcup \{\{X,\emptyset\}\}.$$

\subsubsection{The \texorpdfstring{$\boldsymbol{\sigma}$}{sigma}-algebra on \texorpdfstring{$\boldsymbol{\mathcal{W}}$}{W}}
 We now to describe the $\sigma$-algebra $\mathcal{B}$ on $\mathcal{W}$. 
Let $$\mathcal{A}:=\bigsqcup_{\vect{x}\in \mathcal{H}}\bigsqcup\limits_{U^{i}_{\Lambda_{\vect{x}}}}(\vect{x},\{\overline{U^{i}_{\Lambda_{\vect{x}}}},V^{i}_{\Lambda_{\vect{x}}}\}).$$

There is a bijective map $\psi:\mathcal{A}\rightarrow\mathcal{W},$ and so we will identify these two sets. 
Since $X$ has finitely many shapes, choose $\kappa >0 $ such that the length of any edge is at least $1000\kappa$.
Given a finite set of vertices $v_{1},\hdots , v_{n}$ and a hypergraph wall $W=\{\overline{U_{\Lambda}},V_{\Lambda}\}$, where $\Lambda\cap \{v_{1},\hdots ,v_{n}\}\neq \emptyset$, define the \emph{cylinder set} $\mathcal{A}(v_{1},\hdots v_{n},\{\overline{U_{\Lambda}},V_{\Lambda}\})$
as the set
\begin{align*}
\left\{(\vect{x},\{\overline{U_{\Lambda_{\vect{x}}}},V_{\Lambda_{\vect{x}}}\})\in \mathcal{A}\;:\;\hspace*{-5 pt}\begin{array}{l l}\Lambda_{\vect{x}}\cap \bigg(\bigcup\limits_{i} \mathcal{N}_{\kappa}(v_{i})\bigg)=\Lambda\cap \bigg(\bigcup\limits_{i} \mathcal{N}_{\kappa}(v_{i})\bigg),\\
\overline{U_{\Lambda_{\vect{x}}}}\cap \bigg(\bigcup\limits_{i} \mathcal{N}_{\kappa}(v_{i})\bigg)=\overline{U_{\Lambda}}\cap \bigg(\bigcup\limits_{i} \mathcal{N}_{\kappa}(v_{i})\bigg)
\end{array}\right\}
\end{align*}
Let $\mathcal{B}$ be the $\sigma$-algebra on $\mathcal{W}$ generated by the cylinder sets.

Let $x,y\in X$ and let $\gamma$ be a geodesic between them. A wall $\{\overline{U_{\Lambda}},V_{\Lambda}\}$ separates $x$ and $y$ iff $\gamma\cap \Lambda$ is connected, and $x$ and $y$ lie in different elements of $\{\overline{U_{\Lambda}},V_{\Lambda}\}$. Therefore, as $\gamma$ has finite length, the set, $\omega (x,y)$, of walls separating $x$ and $y$ is a finite union of cylinder sets, which lies in $\mathcal{W}$, i.e. $\omega (x,y)\in\mathcal{W}$.

\subsubsection{The \texorpdfstring{$\boldsymbol{\sigma}$}{sigma}-algebra on \texorpdfstring{$\boldsymbol{\mathcal{V}}$}{V}}
We define the $\sigma$-algebra $\mathcal{B}'$ on $\mathcal{V}$.
Let $$\mathcal{A}':=\mathcal{A}\sqcup \{\vect{\epsilon},\{X_{\vect{\epsilon}},\emptyset_{\vect{\epsilon}}\}\}.$$
There is a bijective map $\psi:\mathcal{A}'\rightarrow\mathcal{V}$, and so we will identify these two sets. 
Define the cyclinder set,  $\mathcal{A}'(v_{1},\hdots v_{n},\{\overline{U_{\Lambda}},V_{\Lambda}\}),$ for $\Lambda\cap(\cup_{i}v_{i})\neq \emptyset$
as
\begin{align*}
\left\{(\vect{x},\{\overline{U_{\Lambda_{\vect{x}}}},V_{\Lambda_{\vect{x}}}\})\in \mathcal{A'}\;:\;\hspace*{-5 pt}\begin{array}{l l}\Lambda_{\vect{x}}\cap \bigg(\bigcup\limits_{i} \mathcal{N}_{\kappa}(v_{i})\bigg)=\Lambda\cap \bigg(\bigcup\limits_{i} \mathcal{N}_{\kappa}(v_{i})\bigg),\\
\overline{U_{\Lambda_{\vect{x}}}}\cap \bigg(\bigcup\limits_{i} \mathcal{N}_{\kappa}(v_{i})\bigg)=\overline{U_{\Lambda}}\cap \bigg(\bigcup\limits_{i} \mathcal{N}_{\kappa}(v_{i})\bigg)
\end{array}\right\}.
\end{align*}
If  $\Lambda\cap(\cup_{i} v_{i})=\emptyset,$ then define $\mathcal{A}'(v_{1},\hdots ,v_{n},\{\overline{U_{\Lambda}},V_{\Lambda}\})$ as 
\begin{align*}
\left\{(\vect{x},\{\overline{U_{\Lambda_{\vect{x}}}},V_{\Lambda_{\vect{x}}}\})\in \mathcal{A'}\;:\;\hspace*{-5 pt}\begin{array}{l l}\Lambda_{\vect{x}}\cap \bigg(\bigcup\limits_{i} \mathcal{N}_{\kappa}(v_{i})\bigg)=\emptyset,\\
\overline{U_{\Lambda_{\vect{x}}}}\cap \bigg(\bigcup\limits_{i} \mathcal{N}_{\kappa}(v_{i})\bigg)=\overline{U_{\Lambda}}\cap \bigg(\bigcup\limits_{i} \mathcal{N}_{\kappa}(v_{i})\bigg)
\end{array}\right\}.
\end{align*}
Let $\mathcal{B}'$ be the $\sigma$-algebra generated by cyclinder sets. 

\subsection{The measure on the space of walls}

Now, we note that $\mathcal{T}$ is a closed subset of $\mathcal{S}$ and hence is compact. It follows that the space $\mathcal{A}'$ is also compact.

For a hypergraph $\Lambda$, and a vertex $v$, let $\Lambda_{v}$ be the pair $(C,P)$ through which $\Lambda$ passes in $Lk(v)$ (this is allowed to be empty, i.e. $(\emptyset,\emptyset)$).
If $C\neq\emptyset$, define $$M(C,P)=\sum\limits_{(C',P')\in [[C,P]]_{e}} \mu(C',P'),$$ where $e$ is any edge in $C$ (by definition, this does not depend on the choice of $e$), and otherwise let $M(\emptyset,\emptyset):=1.$

\subsubsection{The measure \texorpdfstring{$\boldsymbol{\mu}$}{mu} on the space  \texorpdfstring{$\boldsymbol{(\mathcal{A'},\mathcal{B}')}$}{(A',B')}}
For a cylinder set $\mathcal{A}'(v_{1},\hdots ,v_{n},\{\overline{U_{\Lambda}},V_{\Lambda}\}),$ with $\Lambda \cap v_{i}\neq \emptyset$ for some $v_{i}$ define
$$\mu(\mathcal{A}'(v_{1},\hdots ,v_{n},\{\overline{U_{\Lambda}},V_{\Lambda}\})):=\dfrac{1}{M(\Lambda_{v_{1}})\hdots M(\Lambda_{v_{n}})}.$$

We now show that $\mu$ is a measure on $(\mathcal{A}',\mathcal{B}')$.
\begin{lemma}
The set function $\mu$ is a measure.
\end{lemma}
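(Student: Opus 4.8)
The plan is to verify the two defining properties of a measure: $\mu(\emptyset)=0$ and countable additivity. Since $\mathcal{A}'$ is compact (as noted, $\mathcal{T}$ is closed in the compact space $\mathcal{S}$, hence compact, and $\mathcal{A}'$ fibres over $\mathcal{T}$ with finite fibres given by the components of each $X-\Lambda$, though one must check these components are finite in number — they are, because $\Lambda$ is a locally finite tree in a locally finite complex and the cylinder-set structure only ever sees finitely many of them at once), the cylinder sets form a semi-ring of sets that is in fact closed under the relevant operations, and every cylinder set is both open and closed. The strategy is therefore to show $\mu$ is a well-defined, finitely additive, premeasure on the algebra generated by cylinder sets, and then invoke Carathéodory's extension theorem; compactness is what rules out pathological countable covers, reducing countable additivity to finite additivity.

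First I would check that $\mu$ is well-defined on cylinder sets, i.e. independent of the presentation: if $\mathcal{A}'(v_1,\dots,v_n,W) = \mathcal{A}'(w_1,\dots,w_m,W')$, then the two formulas for $\mu$ agree. This is the crucial computational heart and rests entirely on the gluing equations from the definition of evenly $\sigma$-separated: the identity $\sum_{[[C,P]]_e}\mu = \sum_{[C,P]_{e^{-1}}}\mu$ says exactly that $M(\Lambda_v)$ is "conserved" as one moves $\Lambda$ across an edge $e=(v,w)$, so that adding a new vertex $v_{n+1}$ adjacent to the existing data and summing over the compatible ways to extend $\Lambda$ through $v_{n+1}$ multiplies the count by a factor that is cancelled by the new denominator $M(\Lambda_{v_{n+1}})$; the second gluing equation (independence of the edge $e\in C$ chosen) guarantees $M(C,P)$ itself is unambiguous. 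In particular, refining a cylinder set by specifying $\Lambda$ on more vertices partitions it into sub-cylinders whose $\mu$-values sum to the original — this is finite additivity on nested refinements.

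Next I would establish finite additivity in general: any finite disjoint union of cylinder sets lying in a single cylinder set can, by taking a common refinement over the union of all the vertex-tuples involved, be reduced to the nested-refinement case just treated; here one uses that cylinder sets over the same vertex set $\{v_1,\dots,v_n\}$ are either disjoint or equal (they are determined by the finite data $\Lambda\cap\bigcup_i\mathcal{N}_\kappa(v_i)$ and $\overline{U_\Lambda}\cap\bigcup_i\mathcal{N}_\kappa(v_i)$, which take finitely many values), so the algebra generated by cylinder sets consists of finite disjoint unions of cylinder sets. For $\mu(\emptyset)=0$: the empty cylinder set can be presented as a refinement with an incompatible choice, giving an empty sum, or one simply declares it; more honestly, one checks no genuine cylinder set (with the compatibility $\Lambda\cap(\cup v_i)\neq\emptyset$ or the degenerate case) is assigned a nonzero value while being empty, which follows from the existence statement preceding Lemma~\ref{lem: edge hypergraphs are leafless convex} (for every $(C,P)\in\mathcal{CP}_v$ there is an aperiodic hypergraph through it). Finally, countable additivity: given a cylinder set written as a countable disjoint union of cylinder sets, both sides are clopen subsets of the compact space $\mathcal{A}'$, so the cover has a finite subcover, forcing all but finitely many pieces to be empty, and countable additivity collapses to the finite case already proven; hence $\mu$ extends to a genuine measure on $\mathcal{B}'$ by Carathéodory.

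The main obstacle I expect is the well-definedness step — verifying that the two gluing equations precisely encode invariance of the product $\frac{1}{M(\Lambda_{v_1})\cdots M(\Lambda_{v_n})}$ under refinement and re-presentation. The bookkeeping requires carefully matching the combinatorial data "$\Lambda$ restricted to $\mathcal{N}_\kappa(v_i)$" with the cutset-partition pairs $(C,P)\in\mathcal{CP}(e)$ and tracking how the equivalence classes $[[C,P]]_e$ and $[C,P]_{e^{-1}}$ govern the branching of $\Lambda$; once that dictionary is set up, the $Aut(X)$-invariance of $\mu(C,P)$ is not needed for $\mu$ to be a measure (it will be needed later for $G$-invariance), so it can be set aside here.
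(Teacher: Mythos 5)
Your outline (well-definedness of $\mu$ on cylinder sets via the gluing equations, finite additivity by common refinement over a fixed vertex tuple, then Carath\'eodory on the semiring of cylinder sets) agrees with the paper's skeleton, but the step carrying all the weight --- reducing countable additivity to finite additivity by ``clopen $+$ compact $\Rightarrow$ finite subcover'' --- has a genuine gap. For that argument you need every cylinder set to be open and compact in the same topology, and this is exactly what the wall coordinate destroys. A point of $\mathcal{A}'$ is a hypergraph together with a choice of component of its complement, and distinct components are seen by the cylinder structure only through their traces on finite unions of balls $\mathcal{N}_{\kappa}(v_{i})$; when $X-\Lambda$ has infinitely many components (as happens for the branching hypergraphs in the intended applications, e.g.\ the $F090A$ complexes, where the cutsets are large), all but finitely many of those components have closure missing any given finite union of such balls. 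Concretely: fix a germ $(C,P)$ at a vertex $v_{1}$ and let $A_{0}$ be the cylinder set of pairs whose hypergraph has this germ at $v_{1}$ and whose side $\overline{U}$ satisfies $\overline{U}\cap\mathcal{N}_{\kappa}(v_{1})=\emptyset$; enumerating the remaining vertices $v_{2},v_{3},\dots$ and stratifying $A_{0}$ by the first $v_{j}$ with $\overline{U}\cap\mathcal{N}_{\kappa}(v_{j})\neq\emptyset$ exhibits $A_{0}$ as a countably infinite disjoint union of nonempty sets, each a finite union of cylinder sets. So the conclusion ``all but finitely many pieces are empty'' is false; equivalently, a sequence of walls escaping to infinity has no limit in $\mathcal{A}'$ (its would-be limit, a nontrivial hypergraph with empty $\overline{U}$-trace on every ball, is not an element of $\mathcal{A}'$), so nonempty cylinder sets are not compact for any topology in which they are all open. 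Your parenthetical hope that the fibres are finite, or that ``the cylinder-set structure only ever sees finitely many of them at once,'' is precisely where this breaks: the structure not seeing far-away components is the problem, not the cure.

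Consequently countable additivity is a genuine analytic statement and must be proved directly; this is what the paper's proof does and your proposal omits. The paper passes to the quotients $\mathcal{A}_{n}'$ associated to an exhaustion of $X^{(0)}$ by finite sets $V_{n}$, uses the consistency $p_{n}\mu=\mu_{n}$, $\rho_{m,n}\mu_{m}=\mu_{n}$, and runs a Kolmogorov-extension-type continuity argument: writing $C=\bigsqcup_{n}C_{n}$ with $\mu(C)<\infty$ and $E_{N}=C-\bigsqcup_{n\leq N}C_{n}$, it shows $\mu(E_{N})\to 0$ by producing nested nonempty compact sets $K_{M,N}=\rho_{M}^{-1}(\rho_{M}(E_{N}))$ whose intersection would have to lie in $\bigcap_{N}E_{N}=\emptyset$; a separate argument (an exhaustion by finite-measure cylinder sets $A_{i}$ together with an interchange of sums) treats the one cylinder set of infinite measure. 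Without some replacement for this mechanism your proof does not go through. The remainder of your outline --- extracting well-definedness and finite additivity from the two gluing equations, the disjoint-or-equal property of cylinder sets over a common tuple, and the observation that $Aut(X)$-invariance of the weights is not needed for this lemma --- is sound and consistent with the paper.
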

\begin{proof}
Choose a finite sequence of sets $V_{n}\subseteq X^{(0)}$ with $\cup_{n}V_{n}=X^{(0)}$. Define the space $\mathcal{A}_{n}'$ as the quotient of $\mathcal{A}'$ under the map induced by $V_{n}\rightarrow V$. That is,
\begin{align*}
    \mathcal{A}_{n}'&=\left\{(\vect{x},\{\overline{U},V\})\;:\begin{matrix}&\vect{x}\in \prod_{v\in V_{n}}\mathcal{CP}_{v}\mbox{ such that }\exists\; \vect{y}\in\mathcal{A}'\mbox{ with}\\
&\vect{x}_{v}=\vect{y}_{v}\;\forall v\in V,\; 
\{\overline{U}_{\Lambda_{\vect{y}}},V_{\Lambda_{\vect{y}}}\}=\{\overline{U},V\}\end{matrix}\right\}.
\end{align*}

There are associated projection maps $p_{n}:\mathcal{A}'\rightarrow \mathcal{A}_{n}'$ and $\rho _{m,n}:\mathcal{A}_{m}'\rightarrow \mathcal{A}_{n}'$ for any $m>n$. The above construction of $\mu$ also descends to a construction of a measure $\mu_{n}$ on each $\mathcal{A}_{n}'$ such that $p_{n}\mu=\mu_{n}$, and $\rho_{m,n}\mu_{m}=\mu_{n}$. If $\mu (\mathcal{A}')<\infty,$ then it follows by a classical application of the Carath\'{e}odory extension theorem that $\mu$ is a measure on $\mathcal{B}'$ (see e.g. \cite[Corollary 14.36]{Klenke2020}). 

Otherwise, $\mu (\mathcal{A}')=\infty,$ and so $\mu(\mathcal{A}'(v_{1},\hdots, v_{n},(X,\emptyset)))=\infty$ for all finite sets $\{v_{1},\hdots ,v_{n}\}$. We claim there are infinitely many disjoint cylinder sets $A_{i}:=\mathcal{A}'(v_{i_{1}},\hdots ,v_{m_{i}},\{U_{\Lambda_{i}}V_{\Lambda_{i}}\})$, with  $\Lambda_{i}\cap (\cup_{j}v_{i_{j}})\neq \emptyset$ (so that $\mu (A_{i})<\infty)$, and $\sum\limits_{i}\mu (A_{i})=\infty$. If not, then $\mu(\mathcal{A}')<\infty$.

Let $\mathcal{R}$ be the semiring of cyclinder sets in $\mathcal{A}$; this semiring generates the $\sigma$-algebra $\mathcal{B}$, and so we aim to apply the Carath\'eodry extension theorem. It is clear that $\mu$ is finitely additive and $Aut(X)$-invariant, since $X$ is evenly $\pi$-separated, and so the gluing equations are satisfied.

Now, let $C_{n}$ be a pairwise disjoint sequence of sets such that $C=\cup_{n}C_{n}\in \mathcal{R}.$ We need to show that $\mu (C)=\sum_{n}\mu (C_{n}).$ If $\mu(C)$ is finite, then this follows immediately by application of classical techniques that appear e.g. in the proof of the Kolmogorov extension theorem, as follows. Let $D_{N}=\sqcup _{n=1}^{N}C_{n}$ and $E_{N}=C- D_{N}.$ Then 
$$\mu (C)=\mu (D_{N})+\mu (E_{N})=\sum_{n=1}^{N}\mu (C_{n})+\mu(E_{n}).$$ It suffices to show that $\mu(E_{N})\rightarrow 0.$ Suppose otherwise, so that $\mu(E_{N})\rightarrow\epsilon$ for some $\epsilon>0$. We note that $\cap_{N} E_{N}=0$. We may choose for each $M,N$ a compact set $K_{M,N}$, as follows. Since $\mu_{m} (\rho _{m}E_{N})\rightarrow \mu(E_{N})$ as $m\rightarrow\infty$, for each $N$ we may choose $M_{N}$ such that $\vert \mu(E_{N})-\mu_{M}(E_{N})\vert \leq \epsilon\slash 1000$ for all $M\geq M_{N}$. Furthermore, we may require that $M_{N+1}\geq M_{N}$. Let $K_{M,N}:=\rho(M)^{-1}(\rho_{M}(E_{N})).$ Then $K_{M, N+1}\subseteq K_{M,N}$ and $K_{M+1,N}\subseteq K_{M,N}$. Furthermore, each $K_{M,N}$ is compact and non empty. Let $K=\cap_{N}\cap_{M=M_{N}}^{\infty}K_{M,N}$, so that $K$ is non empty. However, if $k\in K$, then for all $N$ and $M\geq M_{N}$, $k\in \rho_{M}^{-1}(E_{N})$. Therefore, $k\in E_{N}$ for all $N$, and hence $k\in \cap_{N}E_{N}=\emptyset$, a contradiction. Therefore $\mu(E_{N})\rightarrow0,$ and $\mu(C)=\sum_{n}\mu (C_{n}).$

If $\mu(C)=\infty,$ then we must have that $$
C=\mathcal{A}(v_{1},\hdots ,v_{m},(X,\emptyset));$$
this is the only possible cylinder set with infinite measure.  We may assume that $\mu (C_{n})<\infty$ for each $n$, otherwise we are finished. Note that $\mu(X-C)<\infty. $ Therefore, by applying the previous paragraph, we see that $$\mu (X-C)=\sum_{i}\mu (A_{i}\cap (X-C)) <\infty ,$$ and so $$\sum\mu (A_{i}\cap C)=\sum_{i} [\mu (A_{i})-\mu(A_{i}-C)]=\mu(X)-\mu(X-C)=\infty.$$

Let us replace the sequence $C_{n}$ by the sequence $A_{i}\cap C_{n}.$ Since $\mu (C_{n})<\infty $ for each $n$, we may also apply the previous paragraph to deduce that for all $n$:
$$\mu(C_{n})=\sum_{i}\mu (A_{i}\cap C_{n}).$$
Since $\mu (A_{i})<\infty$ for each $i$, we also have that for all $i$: $$\mu (A_{i}\cap C)=\sum_{n}\mu (A_{i}\cap C_{n}).$$
Therefore: 
\begin{align*}
  \sum_{n}\mu(C_{n})&=\sum_{n}\left(\sum_{i}\mu (A_{i}\cap C_{n}) \right)\\
  &=  \sum_{i}\left(\sum_{n}\mu (A_{i}\cap C_{n}) \right)\\
  &=\sum_{i}\mu (A_{i}\cap C)\\&=\infty.
\end{align*}

It follows that $\mu$ is a countably-additive measure on $\mathcal{R}$, and so, by the Carath\'{e}odory extension theorem, extends to a measure on the $\sigma$-algebra generated by $\mathcal{R}$,  $\mathcal{B}'$.
\end{proof}

\subsubsection{The measure \texorpdfstring{$\boldsymbol{\nu}$}{nu} on the space  \texorpdfstring{$\boldsymbol{(\mathcal{A},\mathcal{B})}$}{(A,B)}}
Take $\nu:=\mu\vert_{\mathcal{A}}.$ This is a measure on $\mathcal{A}$. Note that there exists a constant $c>0$ such that for any triple of vertices $v_{1},v_{2},v_{3}$ connected by edges in $\Lambda$:

$$\nu (\mathcal{A}(v_{1},v_{2},v_{3},\{\overline{U_{\Lambda}},V_{\Lambda}\}))\geq c.$$
It now remains to show that the space is a space with measured walls.
\begin{lemma}\label{lem: hypergraph wallspace is measured}
The space $(X,\mathcal{W},\mathcal{B},\nu)$ is a space with measured walls on which $Aut(X)$ acts continuously.
\end{lemma}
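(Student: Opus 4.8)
The plan is to verify the three defining requirements of a space with measured walls in the sense of the definition in Section~\ref{measured wallspaces}, and then separately check continuity of the $Aut(X)$-action. First, $(X,d)$ is a proper metric space by our standing assumptions on $X$. Second, $\mathcal{W}$ is a collection of walls in $X$: each element of $\mathcal{W}_{\Lambda_{\vect{x}}}$ is a pair $\{\overline{U^i_{\Lambda_{\vect{x}}}}, V^i_{\Lambda_{\vect{x}}}\}$ with $V^i_{\Lambda_{\vect{x}}} = X - \overline{U^i_{\Lambda_{\vect{x}}}}$, which is a genuine partition of $X$ into two pieces; here I would invoke Lemma~\ref{lem: edge hypergraphs are leafless convex} to note that $\Lambda_{\vect{x}}$ is a convex tree, so each component $U^i_{\Lambda_{\vect{x}}}$ is open and $\overline{U^i_{\Lambda_{\vect{x}}}} = U^i_{\Lambda_{\vect{x}}} \cup (\Lambda_{\vect{x}} \cap \partial U^i_{\Lambda_{\vect{x}}})$, so that the claimed partition is correct and $\Lambda_{\vect{x}}$ is distributed among the various halfspaces $\overline{U^i}$. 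Third, $(\mathcal{W},\mathcal{B},\nu)$ is a measure space by the preceding lemma (that $\mu$, hence $\nu = \mu|_{\mathcal{A}}$, is a measure on the $\sigma$-algebra generated by cylinder sets).

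The substantive point is that for any $x, y \in X$, the set $\omega(x,y)$ of walls separating $x$ and $y$ lies in $\mathcal{B}$ with $\nu(\omega(x,y)) < \infty$. For membership in $\mathcal{B}$: let $\gamma = [x,y]$ be the (unique) geodesic. By Lemma~\ref{lem: separation condition} and the discussion preceding it, a wall $\{\overline{U_\Lambda}, V_\Lambda\}$ separates $x$ and $y$ iff $\gamma \cap \Lambda$ is connected (a single point or arc, with $\gamma$ transverse or incident) and $x$, $y$ lie in different halfspaces; since $\gamma$ has finite length and the cutsets are $\pi$-separated, $\gamma$ meets only finitely many hypergraphs in a separating fashion, and for each the relevant condition is detected on a $\kappa$-neighbourhood of finitely many vertices of $X$ through which $\gamma$ passes — hence $\omega(x,y)$ is a finite union of cylinder sets, so lies in $\mathcal{B}$. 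For finiteness of $\nu(\omega(x,y))$: each cylinder set $\mathcal{A}(v_1,\dots,v_n,\{\overline{U_\Lambda},V_\Lambda\})$ appearing has measure $\frac{1}{M(\Lambda_{v_1})\cdots M(\Lambda_{v_n})} \le 1$ since each $M(C,P)$ is a sum of strictly positive integers $\mu(C',P')$, hence $\ge 1$; and there are only finitely many such cylinder sets because $\gamma$ crosses finitely many vertex-$\kappa$-neighbourhoods and each link has finitely many $\pi$-separated cutsets and associated partitions. Summing finitely many terms each bounded by $1$ gives $\nu(\omega(x,y)) < \infty$.

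Finally, continuity of the $Aut(X)$-action on the measured wall space: $Aut(X)$ acts on $X$ by isometries, permuting vertices, hypergraphs (since the conditions defining $\mathcal{H}$, in particular equatability along edges, are $Aut(X)$-equivariant), and halfspaces, hence acts on $\mathcal{W}$ and $\mathcal{V}$ carrying cylinder sets to cylinder sets; so the action is by measurable transformations, and $\nu$ is $Aut(X)$-invariant precisely because $\mu(C,P)$ was chosen $Aut(X)$-invariantly in the definition of evenly $\pi$-separated, which is what makes $\mu(g \cdot \mathcal{A}(v_1,\dots,v_n,W)) = \mu(\mathcal{A}(gv_1,\dots,gv_n,gW))$. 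For continuity one checks that for fixed $x \in X$, the map $g \mapsto \nu(\omega(x,gx)) = d_\nu(x,gx)$ is continuous at the identity: if $g$ is close to $1$ in the compact-open topology then $gx$ is close to $x$, the geodesic $[x,gx]$ is short, and by the local description of $\omega(x,gx)$ in terms of $\kappa$-neighbourhoods its measure tends to $0$; combined with $Aut(X)$-invariance of $\nu$ and the triangle inequality for $d_\nu$ this gives joint continuity of $(g,h) \mapsto d_\nu(gx, hx)$, i.e. the action on $(X, d_\nu)$ is continuous.

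I expect the main obstacle to be the finiteness and measurability of $\omega(x,y)$ — specifically, pinning down carefully that only finitely many hypergraphs separate $x$ from $y$ and that "separating" is a cylinder condition. Everything else (the partition property, $\nu$ being a measure, invariance) follows from the constructions already in place; the one genuine estimate is the uniform bound $M(C,P) \ge 1$ coming from strict positivity of the integer solution to the gluing equations, which is exactly why that hypothesis was built into the definition of evenly $\pi$-separated.
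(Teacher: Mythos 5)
The first half of your argument (that $\omega(x,y)$ lies in $\mathcal{B}$ and has finite measure) is essentially the paper's: $\omega(x,y)$ is a finite union of cylinder sets, and each cylinder set has measure at most $1$ because every $M(C,P)$ is a sum of strictly positive integers. One phrase is literally false, though harmless as you use it: it is not true that only finitely many hypergraphs separate $x$ from $y$ — a hypergraph agreeing with a separating one near $\gamma$ can be continued in infinitely many ways away from $\gamma$ (the paper notes explicitly that infinitely many hypergraphs may contain a given edge). What is finite is the number of cylinder sets needed to cover $\omega(x,y)$, which is the statement you actually invoke, so this part stands.

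The genuine gap is in the continuity step. Your argument is: $g$ close to $1$ forces $d(x,gx)$ small, the geodesic $[x,gx]$ is short, hence $\nu(\omega(x,gx))$ is small. But $d_{\nu}$ is \emph{not} continuous with respect to $d$ in this wall space: if a point $p$ lies on a hypergraph $\Lambda$ (for instance $p$ a vertex of $X$ in the vertex-cutset case, or the midpoint of an edge in the edge-cutset case), then every point $q\notin\overline{U_{\Lambda}}$, no matter how close to $p$, is separated from $p$ by the entire cylinder set of walls agreeing with $\{\overline{U_{\Lambda}},V_{\Lambda}\}$ near $p$, and such cylinder sets have measure bounded below by a positive constant (the constant $c$ used in Lemma \ref{lem: relating wall distance and distance}). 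So ``short geodesic $\Rightarrow$ small $d_{\nu}$'' fails, and continuity at the identity does not follow from metric proximity alone. What saves the statement — and what the paper's proof uses — is combinatorial rigidity of the action: automorphisms send vertices to vertices, edges to edges and faces to faces, so if $g_{n}\rightarrow g$ then for $x$ a vertex one has $g_{n}x=gx$ for all large $n$, and for $x$ on an edge or in the interior of a $2$-cell, $g_{n}x$ eventually lies in the same edge or $2$-cell as $gx$ (in the same local position relative to the possible hypergraph traces), whence $d_{\nu}(g_{n}x,y)\rightarrow d_{\nu}(gx,y)$. Your reduction via $Aut(X)$-invariance and the triangle inequality for $d_{\nu}$ is fine, but the case analysis by cell type (or some equivalent rigidity argument) must replace the ``measure tends to $0$ because the geodesic is short'' claim.
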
 
\begin{proof}
We first need to prove that for any $x,y\in X$, $\nu (\omega (x,y))<\infty$. Let $\gamma$ be a geodesic from $x$ to $y$. Since there are finitely many types of links in $X$ and $Shapes(X)$ is finite, we see that there is an upper bound $K$ such that any path of length $1$ meets at most $K$ hypergraph edges (there may, however, be infinitely many hypergraphs containing this edge). The only time that a wall can separate $x$ and $y$ is when $\gamma\cap \Lambda$ is connected, and locally $\gamma$ lies in different elements of the partition of $X-\Lambda$. There are at most $2$ choices of partition of $X-\Lambda$ that satisfy this; $\overline{U_{\Lambda}}$ must contain exactly one of $x$ or $y$. Letting $L$ be the maximal number of cutsets in a link, we see that
\begin{align*}
    \nu (\omega (x,y))&\leq 2 KL^{2}(d(x,y)+1)\max\left\{\nu (\Sigma)\;:\;\begin{matrix}
    \Sigma=\mathcal{A}(v_{1},\hdots ,v_{n},\{\overline{U_{\Lambda}},V_{\Lambda}\}),\\
    \Lambda\cap(\cup_{i}v_{i})\neq \emptyset\end{matrix} \right\}\\&\leq 2KL^{2}(d(x,y)+1).
\end{align*}

We now need to show that $Aut(X)$ acts on $(X,d_{\nu})$ continuously. Since $\nu$ is $Aut(X)$-invariant, to prove this it suffices to show that if $g_{n}\rightarrow g$, then $d_{\nu}(g_{n}x,y)\rightarrow d_{\nu}(gx,y)$ for $x,y\in X$. Suppose $x,y \in X$, and $\{g_{n}\}_{n}\subseteq Aut(X)$ with $g_{n}\rightarrow g$. There are three cases. We note that since $Aut(X)$ acts on $X$, it sends vertices to vertices, edges to edges, and polygonal faces to polygonal faces. Firstly if $x$ is a vertex, then there exists $N\geq 1$ such that for all $n\geq N$ $g_{n}x=gx$. If $x$ lies on an edge, then we must have, again, that $g_{n}x$ lies on the same edge as $gx$ for all $n$ suitably large. Finally, if $x$ lies in the interior of a $2$-cell, then $g_{n}x$ and $gx$ lie in the interior of the same $2$-cell for all $n$ sufficiently large.

In all of the above cases, we may conclude immediately that $d_{\nu}(gx,y)=\lim_{n}d_{\nu}(g_{n}x,y).$ Hence, $Aut(X)$ acts continuously on $(X,d_{\nu})$.
\end{proof}

\section{The Haagerup property for groups acting on polygonal complexes}\label{section: Haagerup property for groups acting on polygonal complexes}	
We now wish to prove there exists a proper action on our space with measured walls; we do this by showing the space is $f$-separated. For a metric polygonal complex $X$, let $D(X)$ be the maximal circumference of a polygonal face in $X$. Since $X$ has finitely many shapes, $D(X)$ is finite. The following result is essentially proved in \cite{Ashcroft2020LinkCF}; we provide the proof for completeness.
	\begin{lemma}\label{lem: hypergraph separation}\cite[Lemma 2.42,\;2.43]{Ashcroft2020LinkCF}
		Let $X$ be a CAT(0) polygonal complex as in the statement of Theorem \ref{mainthm: link conditions for the Haagerup property}. Let $\gamma$ be a finite geodesic in $X$ of length at least $4D(X)$. There exists an aperiodic hypergraph $\Lambda$, an associated wall $\{\overline{U_{\Lambda}},V_{\Lambda}\}$ and a triple of vertices $(v_{1},v_{2},v_{3})$ such that any wall in $\mathcal{\mathcal{A}}(v_{1},v_{2},v_{3},\{\overline{U_{\Lambda}},V_{\Lambda}\})$ separates the endpoints of any finite geodesic extension of $\gamma$.
	\end{lemma}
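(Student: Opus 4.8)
The plan is to find, somewhere along the geodesic $\gamma$, a place where a local cutset can be chosen that is guaranteed to separate the two sides of $\gamma$, and then to propagate this cutset to an honest aperiodic hypergraph whose separating behaviour is detected on a bounded neighbourhood of a triple of consecutive vertices. First I would parametrise $\gamma$ and, using the length bound $\ell(\gamma)\geq 4D(X)$, locate an interior subsegment of $\gamma$ that meets no face in more than half its boundary; concretely I would pick a point $x$ in the interior of an edge $e$ of $\Delta$ (where $\Delta=\Delta_X$ or $X^{(1)}$ as appropriate) at which $\gamma$ crosses transversally, far enough from the endpoints of $\gamma$ that whatever we build in a $\kappa$-neighbourhood of $x$ still lies in the interior of $\gamma$. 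At $x$ the two local directions of $\gamma$ determine two points in $Lk(x)$ whose distance in the angular metric is $\pi$ (they are antipodal, since $\gamma$ is geodesic); by condition (ii)–(iii) of being evenly $\pi$-separated, there is a $\pi$-separated cutset $C$ in $Lk(\iota(e))$ through $e$, together with one of the partitions $P_i(C)$, separating the two ends of $\gamma$ in $Lk(x)-\Lambda$.

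Next I would invoke the fact, recorded just after the definition of aperiodic hypergraphs, that for every vertex $v$ and every $(C,P)\in\mathcal{CP}_v$ there is an element $\vect{x}\in\mathcal{H}$ with $\vect{x}_v=(C,P)$; applying this to $v=\iota(e)$ and the pair $(C,P)$ chosen above produces an aperiodic hypergraph $\Lambda=\Lambda_{\vect{x}}$ passing through $(C,P)$ at $v$, hence through $e$. By Lemma~\ref{lem: edge hypergraphs are leafless convex}, $\Lambda$ is a leafless convex tree, and by Lemma~\ref{lem: separation condition} the transversal crossing of $\gamma$ with $\Lambda$ at $x$, together with the fact that the two ends of $\gamma$ lie in different elements of the partition of $Lk(x)-\Lambda$, shows that the two endpoints of $\gamma$ lie in different components of $X-\Lambda$: that is, the wall $W=\{\overline{U_\Lambda},V_\Lambda\}$ (with $U_\Lambda$ the component of $X-\Lambda$ containing one endpoint) separates the endpoints of $\gamma$. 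I would then take $(v_1,v_2,v_3)$ to be three consecutive vertices of $\Lambda$ all lying within $\kappa$ of the crossing point $x$ — these exist because edges of $\Delta$ have length at least $1000\kappa$, so the single edge $e$ already contributes vertices near $x$, and $\Lambda$ is leafless so it extends on both sides.

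Finally I would verify the key cylinder-set statement: any wall $W'=\{\overline{U_{\Lambda'}},V_{\Lambda'}\}$ in $\mathcal{A}(v_1,v_2,v_3,W)$ separates the endpoints of any finite geodesic extension of $\gamma$. By definition of the cylinder set, such a $\Lambda'$ agrees with $\Lambda$ on $\bigcup_i\mathcal{N}_\kappa(v_i)$, and the chosen halfspace agrees as well; in particular $\Lambda'$ still crosses $\gamma$ transversally at $x$ with the same local partition of the star, so Lemma~\ref{lem: separation condition} applies verbatim. Since $\gamma$ is a geodesic in the CAT(0) space $X$ and $\Lambda'$ is convex, $\gamma\cap\Lambda'$ is a single point, so no further crossings along any geodesic \emph{extension} of $\gamma$ can undo the separation — the extension is still a geodesic transverse to $\Lambda'$ at the same point $x$, with its two ends in the same two local components as before. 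Hence $W'$ separates the endpoints of every finite geodesic extension of $\gamma$. The main obstacle I anticipate is the bookkeeping in the second paragraph: arranging that the crossing point $x$, the triple $(v_1,v_2,v_3)$, and the $\kappa$-neighbourhoods all sit in the \emph{interior} of $\gamma$ (so that the conclusion survives passing to extensions) while simultaneously having enough room in $Lk(x)$ to apply the evenly-$\pi$-separated hypothesis — this is where the quantitative slack $\ell(\gamma)\geq 4D(X)$ and the choice $\kappa$ with edges of length $\geq 1000\kappa$ are really being spent, and one must check the neighbourhoods do not run off the ends of $\gamma$.
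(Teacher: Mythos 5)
Your overall skeleton (find a local cutset separating the two directions of $\gamma$, extend it to an aperiodic hypergraph through $(C,P)$ using the fact that every pair in $\mathcal{CP}_v$ is realised by some $\vect{x}\in\mathcal{H}$, apply Lemma~\ref{lem: separation condition}, and use convexity of $\Lambda$ so the separation persists under geodesic extension) is the right one, but there is a genuine gap at the very first step: you assume that $\gamma$ crosses some edge of $\Delta$ transversally at an interior point. No such point need exist --- a geodesic in a polygonal complex may run along the $1$-skeleton and pass through vertices of $X$ without ever crossing an edge transversally. The paper's proof cuts out a middle segment $\delta$ of length between $D(X)$ and $2D(X)$ (this is where the bound $4D(X)$ is spent: $\delta$ is too long to stay in one open face, so it must meet $X^{(1)}$) and then splits into exactly two cases: either $\delta$ meets $X^{(1)}$ in an isolated point $u$ interior to an edge (your case, handled via condition (iii) of the vertex $\pi$-separated definition), or $\delta$ contains a subpath lying entirely in $X^{(1)}$ and hence passes through a vertex $v$. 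In the second case the incoming and outgoing directions of $\gamma$ at $v$ are two points of $Lk(v)$ at angular distance at least $\pi$ which need not be vertices of the link, and one must invoke condition (iv) of the vertex-separated definition (separation of \emph{arbitrary} points at distance at least $\pi$), not the adjacent-vertices condition you use. Your argument never addresses this case, and it cannot be absorbed into the transversal-crossing case; an analogous dichotomy is needed for edge cutsets with $\Delta=\Delta_X$.

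A secondary error: you take $(v_1,v_2,v_3)$ to be ``three consecutive vertices of $\Lambda$ all lying within $\kappa$ of the crossing point'', citing the edge-length bound $\geq 1000\kappa$ as the reason they exist --- but that bound makes this impossible, since consecutive vertices of $\Lambda$ are endpoints of edges of $\Delta$ and so no two of them are within $\kappa$ of a common point. The triple should consist of vertices of $\Lambda$ joined by edges of $\Lambda$ at the separating cutset (this is what gives the cylinder set measure at least the constant $c$ used in Lemma~\ref{lem: relating wall distance and distance}), and the reason agreement on the $\kappa$-balls around these vertices suffices is not that $\Lambda'$ agrees with $\Lambda$ near the crossing point (which may lie far from every $v_i$), but that a hypergraph is a union of closed edges of $\Delta$ determined by its cutset data at vertices: agreement of germs at $\iota(e)$ already forces $\Lambda'$ to contain the whole edge through which $\gamma$ passes, and the halfspace agreement at those vertices then places the two sides of $\gamma$ in different halves of the wall. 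With the missing case supplied and the triple chosen correctly, the rest of your argument goes through.
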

	\begin{proof}

Let us suppose that $X$ is evenly $\pi$-separated with vertex cutsets. Since $\gamma$ is of length at least $4D(X)$, we can write $\gamma=\gamma_{1}\cdot\delta\cdot\gamma_{2}$, where each $\gamma_{i}$ is of length at least $D(X)\slash 2$, and $\delta$ is a path of length between $D(X)$ and $2D(X)$ that starts at a point $v\in X^{(1)}$ and ends at $w\in X^{(1)}$. 
		
	First suppose that  $\delta$ contains a nontrivial subpath, $\delta'$, which contains exactly one point of $X^{(1)}$, $u$, in its interior. Let $e$ be the edge of $X$ containing $u$. Since $\delta'$ is geodesic, we see that $\iota (\delta')$ and $\tau (\delta')$ lie in two distinct faces; as $X$ is evenly vertex $\pi$-separated there exists a $\pi$-separated cutset $C\ni e$ and partition $P$ of $\pi_{0}(Lk(\iota (e))-C)$ with $\iota (\delta '),\tau (\delta')$ lying in distinct elements of $P$. Let $\Lambda$ be any hypergraph passing through $(C,P)$ in $Lk(\iota (e))$; by Lemma \ref{lem: separation condition} $\Lambda$ separates the endpoints of $\delta'$, and hence the endpoints of $\gamma$.
		
		Otherwise, $\delta$ contains a subpath that is contained entirely in $X^{(1)}$; therefore $\delta$ must meet a vertex $v$ of $X$. Let $\delta_{1}$, $\delta_{2}$ be the two subpaths of $\gamma$ incident to $v$; as $\gamma$ is geodesic, $d_{Lk(v)}(\delta_{1},\delta_{2})\geq \pi$. Let $C$ be a vertex cutset such that $\gamma_{1}$ and $\gamma_{2}$ lie in different components of $Lk(v)-C$ and let $P$ be a chosen partition of $\pi_{0}(Lk(v)-C)$ separating $\gamma_{1}$ and $\gamma_{2}$ (this exists as $X$ is evenly vertex $\pi$-separated). Let $\Lambda$ be any vertex hypergraph passing through $(C,P)$ in $Lk(v)$; by Lemma \ref{lem: separation condition} this separates $\gamma_{1}$ and $\gamma_{2}$, and so separates the endpoints of $\gamma$. Again, choosing the appropriate wall, the result follows. 
		
		The proof is similar for edge cutsets.
	\end{proof}
	
	\begin{lemma}\label{lem: relating wall distance and distance}
	There exists a constant $\lambda\geq 1$ such that for any points $x,y\in X:$
	$$(d(x,y)-1)\slash \lambda \leq \nu (\omega (x,y))\leq \lambda(d(x,y)+1).$$
	\end{lemma}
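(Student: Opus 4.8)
The plan is to prove the two inequalities separately, using results already established. The upper bound is essentially contained in Lemma \ref{lem: hypergraph wallspace is measured}, where it was shown that $\nu(\omega(x,y))\leq 2KL^{2}(d(x,y)+1)$; so I would simply invoke that computation with $\lambda$ chosen at least as large as $2KL^{2}$. The substance of the lemma is therefore the lower bound $(d(x,y)-1)/\lambda\leq\nu(\omega(x,y))$, which says that points that are far apart in the CAT(0) metric are separated by a lot of wall-measure.

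For the lower bound I would argue as follows. Fix $x,y\in X$ and let $\gamma=[x,y]$ be the (unique) geodesic between them; set $\ell=d(x,y)$. If $\ell<4D(X)$ the inequality is automatic once $\lambda\geq 4D(X)$, so assume $\ell\geq 4D(X)$. Subdivide $\gamma$ into roughly $\ell/(4D(X))$ consecutive subsegments $\gamma^{(1)},\dots,\gamma^{(m)}$, each of length between $4D(X)$ and $8D(X)$, say, with $m\geq c_0\ell$ for an explicit constant $c_0$ depending only on $D(X)$. For each $\gamma^{(j)}$, Lemma \ref{lem: hypergraph separation} produces an aperiodic hypergraph $\Lambda_j$, a wall $\{\overline{U_{\Lambda_j}},V_{\Lambda_j}\}$, and a triple of vertices $(v_1^{(j)},v_2^{(j)},v_3^{(j)})$ lying within $\gamma^{(j)}$ (more precisely within the middle portion $\delta_j$ of $\gamma^{(j)}$) such that every wall in the cylinder set $\mathcal{A}(v_1^{(j)},v_2^{(j)},v_3^{(j)},\{\overline{U_{\Lambda_j}},V_{\Lambda_j}\})$ separates $x$ and $y$ (these are the endpoints of a geodesic extension of $\gamma^{(j)}$, and $\gamma$ is such an extension). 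By the remark preceding Lemma \ref{lem: hypergraph wallspace is measured}, $\nu\big(\mathcal{A}(v_1^{(j)},v_2^{(j)},v_3^{(j)},\{\overline{U_{\Lambda_j}},V_{\Lambda_j}\})\big)\geq c$ for the uniform constant $c>0$ there. The key point now is disjointness: the cylinder sets for different $j$ are concerned with triples of vertices lying in disjoint middle portions $\delta_j$ of the $\gamma^{(j)}$, which are pairwise at distance bounded below; but the cylinder sets record the behaviour of $\Lambda_{\vect{x}}$ only in $\kappa$-neighbourhoods of their specified vertices, and $\kappa$ is tiny (edges have length $\geq 1000\kappa$), so one needs that the specified triples are genuinely distinct sets of vertices across different $j$, which they are by construction. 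Hence the cylinder sets $\mathcal{A}(v_1^{(j)},v_2^{(j)},v_3^{(j)},\{\overline{U_{\Lambda_j}},V_{\Lambda_j}\})$, $j=1,\dots,m$, are pairwise disjoint subsets of $\omega(x,y)$, giving
\[
\nu(\omega(x,y))\geq \sum_{j=1}^{m}\nu\big(\mathcal{A}(v_1^{(j)},v_2^{(j)},v_3^{(j)},\{\overline{U_{\Lambda_j}},V_{\Lambda_j}\})\big)\geq cm\geq cc_0\,\ell,
\]
and choosing $\lambda\geq\max\{2KL^{2},\,4D(X),\,1/(cc_0)\}$ (and absorbing the additive $-1$) yields both inequalities simultaneously.

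The main obstacle I expect is the disjointness of the cylinder sets: one has to be careful that the walls produced by Lemma \ref{lem: hypergraph separation} for different subsegments are placed in the cylinder sets indexed by \emph{distinct} vertex triples, so that membership in two of them is not simultaneously possible — this is where the choice of $\kappa$ much smaller than the edge length, and the fact that the $\delta_j$ are separated along $\gamma$, is used. A secondary point is bookkeeping the constants so that the single $\lambda$ works uniformly over all $x,y$, including the short-geodesic case and the additive corrections; this is routine but must be done. One should also double-check that a wall lying in such a cylinder set indeed separates $x$ and $y$ and not merely the endpoints of $\gamma^{(j)}$ — but this follows from the clause in Lemma \ref{lem: hypergraph separation} about finite geodesic extensions, since $\gamma=[x,y]$ is a geodesic extension of each $\gamma^{(j)}$.
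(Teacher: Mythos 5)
Your overall strategy is the one the paper uses: the upper bound is just the computation from Lemma \ref{lem: hypergraph wallspace is measured}, and the lower bound comes from chopping the geodesic $\gamma=[x,y]$ into subsegments of length comparable to $D(X)$, applying Lemma \ref{lem: hypergraph separation} to each one, and summing the measures of the resulting cylinder sets, each of which is bounded below by the uniform constant $c$; this is exactly how the paper arrives at $\nu(\omega(x,y))\geq \frac{c}{D(X)}(d(x,y)-1)$.

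There is, however, a genuine problem with the justification you give at the decisive step, the pairwise disjointness of the cylinder sets. A cylinder set $\mathcal{A}(v_{1},\dots,v_{n},\{\overline{U_{\Lambda}},V_{\Lambda}\})$ is a set of walls $(\vect{x},\{\overline{U_{\Lambda_{\vect{x}}}},V_{\Lambda_{\vect{x}}}\})$; the indexing tuple of vertices is not part of the element. So the fact that the triples $(v_{1}^{(j)},v_{2}^{(j)},v_{3}^{(j)})$ are distinct for distinct $j$ does not by itself prevent a single wall from satisfying the local matching conditions at two different triples and hence lying in two of your cylinder sets, and the smallness of $\kappa$ plays no role in ruling this out. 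The missing ingredient is convexity: by Lemma \ref{lem: edge hypergraphs are leafless convex} each $\Lambda_{\vect{x}}$ is a convex tree, so $\gamma\cap\Lambda_{\vect{x}}$ is convex, hence connected. If a wall lies in the $j$-th cylinder set, then agreement with $\Lambda_{j}$ on the $\kappa$-neighbourhoods of the triple forces $\Lambda_{\vect{x}}$ to contain the entire hypergraph edge(s) of $\Lambda_{j}$ at those vertices (hypergraphs are unions of whole edges), in particular the crossing point $u_{j}$ of $\Lambda_{j}$ with $\gamma^{(j)}$; and, exactly as in the two cases of the proof of Lemma \ref{lem: hypergraph separation}, near $u_{j}$ the geodesic meets $\Lambda_{\vect{x}}$ only in $u_{j}$ (either $\gamma$ immediately leaves $X^{(1)}$ there, or the local cutset data at the crossing vertex excludes the directions of $\gamma$). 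Connectedness then gives $\gamma\cap\Lambda_{\vect{x}}=\{u_{j}\}\subseteq\gamma^{(j)}$, and since the crossing loci for different $j$ lie in disjoint subsegments of $\gamma$, no wall can lie in two of the cylinder sets. With this repair (and the routine constant bookkeeping you already note), your argument coincides with the paper's.
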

	In particular, the space with measured walls is $f$-separated for $f$ a linear function, i.e. $f(x)=\lambda x+ \lambda $.
\begin{proof}
We already know that there exists a constant $\lambda'>0$ such that for any $x,y$: $\nu (\omega (x,y))\leq \lambda' (d(x,y)+1)$. By Lemma \ref{lem: hypergraph separation}, we see that $$\nu (\omega (x,y))\geq \dfrac{c}{D(X)}(d(x,y)-1),$$
where $c$ is the minimal measure of a cylinder set of the form $\mathcal{\mathcal{A}}(v_{1},v_{2},v_{3},\{\overline{U_{\Lambda}},V_{\Lambda}\})$, which is non-zero.
\end{proof}

We may now immediately conclude our main theorem. 

\begin{proof}[Proof of Theorem \ref{mainthm: link conditions for the Haagerup property}]

Let $((X,d),\mathcal{W},\mathcal{B},\nu)$ be the space with measured walls constructed previously (we know it is a space with measured walls by Lemma \ref{lem: hypergraph wallspace is measured}). We know that, by definition, $Aut(X)$ acts properly on $(X,d)$. Furthermore, by Lemma \ref{lem: hypergraph wallspace is measured}, $Aut(X)$ acts continuously on the space $(X,d_{\nu})$. By Lemma \ref{lem: hypergraph separation} the space with measured walls is $f$-separated, and so Theorem \ref{mainthm: link conditions for the Haagerup property} follows by Lemma \ref{lem: measured wallspace f-separated implies Haagerup}.
\end{proof}

\section{Application to \'{S}wi\k{a}tkowski's complexes}\label{section: applications}
We now turn our attention to prove Corollary \ref{maincor: application to F090A complexes}; we begin by discussing cubic graphs.

\begin{definition}[Cubic graphs]
Let $\Gamma$ be a finite graph; we call $\Gamma$ \emph{cubic} if it is connected, bipartite, and trivalent.  
\end{definition}
If $\Gamma$ is cubic, for each $v\in V(\Gamma)$ we will pick an ordering of the neighbours of $v$, $w_{1}(v)$, $w_{2}(v)$, $w_{3}(v)$.
We introduce the notion of a $*$-separated graph, a strengthening of vertex $3$-separated, as it is easier to verify the gluing equations for complexes whose links are such graphs.
\begin{definition}[$*$-separated cutsets]
Let $C$ be a vertex cutset in a graph $\Gamma$. We say $C$ is a \emph{$*$-separated cutset} if $C$ is $3$-separated under the combinatorial metric, $\Gamma - C$ contains exactly two components, and $C$ is minimal (i.e. for any $w\in C$, $C-\{w\}$ is not a cutset).
\end{definition}

\begin{definition}
Let $\Gamma$ be a cubic graph, and let $\mathcal{C}$ be a collection of $*$-separated cutsets. For $v\in V(\Gamma)$, we define
$$*(v,i,j)$$
to be the set of all $*$-separated cutsets $C\ni v$ such that $w_{i}(v)$ and $w_{j}(v)$ lie in the same connected component of $\Gamma - C$. We further define $$\mathcal{C}(v,i,j):=\mathcal{C}\cap *(v,i,j).$$
\end{definition}
\begin{definition}[$*$-separated graph]
Let $\Gamma$ be a graph. We say that $\Gamma$ is \emph{$*$-separated} if:
\begin{enumerate}[label=$\roman*)$]
	\item $\Gamma$ is a cubic graph,
	\item $\Gamma$ is vertex $3$-separated by a set $\mathcal{C}$ of $*$-separated cutsets (and hence for any  vertex $v$ and any $i\neq j$, $\mathcal{C}(v,i,j)$ is non-empty), and
	\item there exists an integer $M$ such that for any vertex $v$ and any $i\neq j $, $\vert C\in\mathcal{C}(v,i,j)\vert  =M \slash 3.$
\end{enumerate}
\end{definition}
We prove the following lemma.

\begin{lemma}\label{lem: F090 separation}
Endow the graph $F090A$ with the combinatorial metric. Then $F090A$ is $*$-separated.
\end{lemma}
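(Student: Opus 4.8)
The claim is that the Foster graph $F090A$, with its combinatorial metric, is $*$-separated. By the definition of $*$-separated, I need to exhibit: (i) that $F090A$ is cubic (connected, bipartite, trivalent) — this is classical and well-documented; (ii) a collection $\mathcal{C}$ of $*$-separated cutsets that makes $F090A$ vertex $3$-separated in the sense of the earlier definition; and (iii) an integer $M$ with $|\mathcal{C}(v,i,j)| = M/3$ for all vertices $v$ and all $i \ne j$. The natural strategy is to exploit the large automorphism group of $F090A$: since $F090A$ is arc-transitive (indeed $5$-arc-transitive, with $\mathrm{Aut}(F090A)$ acting transitively on ordered paths of length up to $5$), it suffices to find one suitable cutset and then take $\mathcal{C}$ to be its orbit under $\mathrm{Aut}(F090A)$; symmetry will then deliver the uniform count in (iii) essentially for free.

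First I would pin down the structure of $F090A$ needed here: girth and the relevant small-distance combinatorics. The Foster graph has girth $10$, so $girth(F090A) = 10 \geq 2 \cdot 3$ wait — actually I should be careful, I want to apply Lemma~\ref{lem: vertex separated condition} with $n = 3$, which needs $girth \geq 2n = 6$; girth $10$ comfortably suffices. So the plan is to produce a collection $\mathcal{C}$ of $3$-separated, minimal, two-component vertex cutsets satisfying the four hypotheses of Lemma~\ref{lem: vertex separated condition}: they cover $V(F090A)$, each has size $\geq 2$, for every vertex $v$ and distinct neighbours $w, w'$ some cutset in $\mathcal{C}$ separates them, and any two vertices at distance $\geq 3$ are separated by some cutset. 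Given arc-transitivity, covering $V$ and the neighbour-separation condition follow once I check them at a single vertex; the distance-$\geq 3$ separation condition is the one requiring genuine verification, but the distance-transitivity of $F090A$ (it is distance-regular) reduces this to checking one representative pair at each distance $3, 4, \ldots, \mathrm{diam}(F090A) = 8$.

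For the concrete cutset: I would look for a small $3$-separated cutset — e.g. the set of vertices at some fixed distance from an edge or a vertex, or a "local" cutset around a vertex — verify it disconnects $F090A$ into exactly two pieces, is minimal, and is $3$-separated (pairwise distances $\geq 3$ between its vertices). Then $\mathcal{C} := \mathrm{Aut}(F090A) \cdot C$. The uniform-count condition (iii) is where arc-transitivity does the real work: the stabilizer of $v$ acts on the three neighbours, and because $\mathrm{Aut}(F090A)$ is $\geq 2$-arc-transitive the vertex stabilizer induces the full symmetric group $S_3$ on $\{w_1(v), w_2(v), w_3(v)\}$, so $|\mathcal{C}(v,i,j)|$ is independent of the pair $(i,j)$; transitivity on vertices then makes it independent of $v$, and calling the common value $M/3$ defines $M$.

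**Main obstacle.** I expect the genuine work — and the only part not handled by a soft symmetry argument — to be the explicit identification of a single $*$-separated cutset in $F090A$ and the verification of the distance-$\geq 3$ separation hypothesis of Lemma~\ref{lem: vertex separated condition}; in practice this is a finite check on a $90$-vertex graph, plausibly done by computer (as the paper indeed anticipates: "these cutsets and integer weights may be found by computer search"). Once such a cutset is found, confirming it splits into exactly two components and is minimal is routine, and everything else propagates by the transitivity properties of $\mathrm{Aut}(F090A)$. So the proof I would write is: recall $F090A$ is cubic and list its relevant parameters (girth $10$, diameter $8$, distance-regular, $5$-arc-transitive); exhibit the cutset $C$ and its orbit $\mathcal{C}$; invoke Lemma~\ref{lem: vertex separated condition} to get vertex $3$-separation, checking its four conditions using transitivity plus a finite case analysis; and finally read off $M$ from the $S_3$-action on neighbours, concluding $F090A$ is $*$-separated.
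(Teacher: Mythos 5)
Your strategy is the same as the paper's in all essential respects: find explicit $*$-separated cutsets by computer, use the girth bound together with Lemma \ref{lem: vertex separated condition} (with $n=3$, $girth=10\geq 6$) to get vertex $3$-separation, use distance-transitivity to reduce the distance-$\geq 3$ separation condition to one representative pair at each distance $3,\dots,8$, and use transitivity of $Aut(F090A)$ to get the uniform count $\vert\mathcal{C}(v,i,j)\vert=M/3$. Your variant of the counting step is actually slightly cleaner than the paper's: you take $\mathcal{C}$ to be an $Aut$-invariant family and note that the vertex stabilizer induces $S_{3}$ on the three neighbours (by $2$-arc-transitivity), so the three counts at a fixed vertex coincide and vertex-transitivity removes the dependence on $v$; the paper instead argues via distance-transitivity and a multiset of translates before passing to the underlying set. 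One also needs the small observation (implicit in both arguments) that a minimal two-component cutset through $v$ lies in exactly one $*(v,i,j)$, so the three equal counts sum to the number of cutsets through $v$.

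The gap is that the substantive content of the lemma is never actually produced. The existence of even one $*$-separated cutset in $F090A$, and the fact that the chosen family separates a representative pair at every distance $3$ through $8$, cannot be extracted from symmetry considerations: this is exactly the finite verification the lemma encodes, and the paper supplies it as data --- three explicit $21$-vertex cutsets $C_{1},C_{2},C_{3}$, the check at $v_{1}$ that each pair of its neighbours is separated by one of them, and the check that the representative pairs $P_{3},\dots,P_{8}$ are each separated by some $C_{i}$. Your proposal defers all of this to a future computer search, so as written it is a plan rather than a proof. There is also an unjustified structural assumption: you take $\mathcal{C}$ to be the orbit of a \emph{single} cutset. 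The $S_{3}$-stabilizer argument does guarantee that one orbit handles the neighbour-separation condition, but nothing guarantees that one orbit separates pairs at all distances $3$ through $8$; the paper hedges exactly this by using the orbits of three cutsets (which need not be $Aut$-equivalent). This is easily repaired --- allow $\mathcal{C}$ to be a finite $Aut$-invariant union of orbits, as the paper does, and the rest of your symmetry argument goes through unchanged --- but the explicit certificate still has to be exhibited for the lemma to be proved.
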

\begin{longtable}{c|ccl c c|ccl c c|ccl}
\caption{Edge incidences for $F090A$}\\
$v_{i}$  &\multicolumn{3}{c}{$v_{j}$ adjacent to $v_{i}$} &&	$v_{i}$  &\multicolumn{3}{c}{$v_{j}$ adjacent to $v_{i}$}& &	$v_{i}$ &\multicolumn{3}{c}{$v_{j}$ adjacent to $v_{i}$}\\
	\hline
1 & 2 & 18 & 90 && 31 & 30 & 32 & 48 && 61 & 60 & 62 & 78 \\
 2 & 1 & 3 & 83 && 32 & 23 & 31 & 33 && 62 & 53 & 61 & 63 \\
 3 & 2 & 4 & 40 && 33 & 32 & 34 & 70 && 63 & 10 & 62 & 64 \\
 4 & 3 & 5 & 57 &&34 & 33 & 35 & 87 && 64 & 27 & 63 & 65 \\
 5 & 4 & 6 & 14 &&35 & 34 & 36 & 44 && 65 & 64 & 66 & 74 \\
 6 & 5 & 7 & 79 && 36 & 19 & 35 & 37 && 66 & 49 & 65 & 67 \\
 7 & 6 & 8 & 24 && 37 & 36 & 38 & 54 && 67 & 66 & 68 & 84 \\
 8 & 7 & 9 & 89 && 38 & 29 & 37 & 39 && 68 & 59 & 67 & 69 \\
 9 & 8 & 10 & 46 && 39 & 38 & 40 & 76 && 69 & 16 & 68 & 70 \\
 10 & 9 & 11 & 63 && 40 & 3 & 39 & 41 && 70 & 33 & 69 & 71 \\
 11 & 10 & 12 & 20 &&41 & 40 & 42 & 50 && 71 & 70 & 72 & 80 \\
 12 & 11 & 13 & 85 &&42 & 25 & 41 & 43 && 72 & 55 & 71 & 73 \\
 13 & 12 & 14 & 30 &&43 & 42 & 44 & 60 && 73 & 72 & 74 & 90 \\
 14 & 5 & 13 & 15 &&44 & 35 & 43 & 45 && 74 & 65 & 73 & 75 \\
 15 & 14 & 16 & 52 &&45 & 44 & 46 & 82 && 75 & 22 & 74 & 76 \\
 16 & 15 & 17 & 69 && 46 & 9 & 45 & 47 && 76 & 39 & 75 & 77 \\
 17 & 16 & 18 & 26 && 47 & 46 & 48 & 56 && 77 & 76 & 78 & 86 \\
 18 & 1 & 17 & 19 &&48 & 31 & 47 & 49 && 78 & 61 & 77 & 79 \\
 19 & 18 & 20 & 36 && 49 & 48 & 50 & 66 && 79 & 6 & 78 & 80 \\
 20 & 11 & 19 & 21 && 50 & 41 & 49 & 51 && 80 & 71 & 79 & 81 \\
 21 & 20 & 22 & 58 &&51 & 50 & 52 & 88 && 81 & 28 & 80 & 82 \\
 22 & 21 & 23 & 75 && 52 & 15 & 51 & 53 && 82 & 45 & 81 & 83 \\
 23 & 22 & 24 & 32 &&53 & 52 & 54 & 62 && 83 & 2 & 82 & 84 \\
 24 & 7 & 23 & 25 &&54 & 37 & 53 & 55 && 84 & 67 & 83 & 85 \\
 25 & 24 & 26 & 42 && 55 & 54 & 56 & 72 && 85 & 12 & 84 & 86 \\
 26 & 17 & 25 & 27 && 56 & 47 & 55 & 57 && 86 & 77 & 85 & 87 \\
 27 & 26 & 28 & 64 && 57 & 4 & 56 & 58 && 87 & 34 & 86 & 88 \\
 28 & 27 & 29 & 81 && 58 & 21 & 57 & 59 && 88 & 51 & 87 & 89 \\
 29 & 28 & 30 & 38 &&59 & 58 & 60 & 68 && 89 & 8 & 88 & 90 \\
 30 & 13 & 29 & 31 && 60 & 43 & 59 & 61 && 90 & 1 & 73 & 89
\label{tab:F90 edges}
\end{longtable}
\begin{proof}
We note that $F090A$ is a cubic graph. Furthermore, $F090A$ is distance-regular, i.e. given any two ordered pairs $(u_{1},u_{2})$ and $(v_{1},v_{2})$ with $d(u_{1},u_{2})=d(v_{1},v_{2})$, there exists an automorphism of $F090A$ taking $u_{i}$ to $v_{i}$. A computer search yields the $*$-separated cutsets
\begin{align*}
C_{1}&=\{v_{1}, v_{4}, v_{7}, v_{10}, v_{15}, v_{21}, v_{26}, v_{29}, v_{32}, v_{35}, 
v_{41}, v_{47}, v_{54}, v_{61}, v_{65}, v_{68}, v_{71}, \\&
\vspace*{30 pt}v_{76}, v_{82}, v_{85}, v_{88}\},\\
C_{2}&=\{v_{1}, v_{4}, v_{7}, v_{10}, v_{15}, v_{21}, v_{26}, v_{29}, v_{33}, v_{36}, v_{41}, v_{45}, v_{48}, v_{55}, 
v_{61}, v_{65}, v_{68},\\&
\vspace*{30 pt} v_{76}, v_{80}, v_{85}, v_{88}\},\\
C_{3}&=\{v_{1}, v_{4}, v_{7}, v_{10}, v_{15}, v_{21}, v_{26}, v_{30}, v_{33}, v_{37}, v_{41}, v_{44}, v_{47}, v_{61}, v_{65}, v_{68}, v_{72},\\&
\vspace*{30 pt}v_{76}, v_{81}, v_{85}, v_{88}\}.
\end{align*}
Now, the vertices adjacent to $v_{1}$ are $v_{2},\;v_{18},\;v_{90}$: $v_{2}$ lies in a different component to $\{v_{18},v_{90}\}$ in $F090A-C_{2};$ $v_{18}$ lies in a different component to $\{v_{2},v_{90}\}$ in $F090A-C_{3};$ and $v_{90}$ lies in a different component to $\{v_{2},v_{18}\}$ in $F090A-C_{1}.$

Applying automorphisms we see that there exists a collection of $*$-separated vertex cutsets $C_{i}\subseteq V(F090A)$ such that $\vert C_{i}\vert \geq 2$ for each $i$, $\cup_{i}C_{i}=V(F090)$, and that for any vertex $v$ and any distinct vertices $w,w'$ adjacent to $v$ there exists a $*$-separated vertex cutset $C_{i}$ such that $w$ and $w'$ lie in distinct components of $F090A - C_{i}$.

If $x$ and $y$ are distance at least $3$ apart, we may apply an automorphism $\phi$ to map the pair $(x,y)$ to: $P_{3}=(v_{2},v_{17})$ if $d(x,y)=3$; $P_{4}=(v_{3},v_{19})$ if $d(x,y)=4$; $P_{5}=(v_{2},v_{9})$ if $d(x,y)=5$; $P_{6}=(v_{3},v_{9})$ if $d(x,y)=6$; $P_{7}=(v_{16},v_{39})$ if $d(x,y)=7$; and $P_{8}=(v_{16},v_{63})$ if $d(x,y)=8$. Each $P_{j}$ is separated by some $C_{i}$, and so $\phi^{-1}C_{i}$ separates the pair $x$ and $y$. Hence, $F090A$ is vertex $3$-separated.

Fix some cutset $C=C_{i}$, choose a vertex $v\in C$, and let $w\in V(F090A)$ be any vertex. Let $H=Aut(F090A)$. Since $H$ acts vertex transitively, there exists $h\in H$ such that $h v=w$. Therefore 
$$\{\gamma\in H \;:\;v\in \gamma C\}=\{\gamma\in H \;:\;w\in h\gamma C\}=\{h^{-1}\gamma'\in H \;:\;w\in \gamma' C\},$$
and hence $\vert\{\gamma\in H \;:\;v\in \gamma C\}\vert=\vert\{\gamma\in H \;:\;w\in \gamma C\}\vert.$ Let $$\tilde{\mathcal{C'}}:=\bigsqcup\limits_{C\in\{C_{1},C_{2},C_{3}\}}H(C),$$
with multiplicity. By the above, it follows that for any two vertices $v,w\in V(F090A)$,
$$\vert\{C\in \tilde{\mathcal{C}'} \;:\;v\in  C\}\vert=\vert\{C\in \tilde{\mathcal{C}'} \;:\;w\in  C\}\vert.$$ 

Finally, since $F090A$ is distance regular, for any $v$, $i\neq j$ and $i'\neq j'$ there exists some $\phi\in H$ with $\phi( *(v,i,j))=*(v,i',j').$
Therefore for any $C=C_{i}$, $\vert\{\gamma\in H \;:\;C\in *(v,i,j)\}\vert=\vert\{\gamma\in H \;:\;C\in  *(v,i',j')\}\vert.$ Let $\mathcal{C}$ be the underlying set of $\tilde{\mathcal{C}'}$. It follows immediately that there exists an integer $M$ such that for any vertex $v$ and any $i\neq j $, $\vert C\in\mathcal{C}(v,i,j)\vert  =M \slash 3.$
\end{proof}

It remains to show the following.

\begin{lemma}\label{lem: F090 complexes are separated}
Let $X$ be a simply-connected triangular complex such that every triangle is a unit equilateral Euclidean triangle, and the link of every vertex is isomorphic to $F090A$ as non-metric graphs. Then $X$ is CAT(0) and evenly $\pi$-separated.
\end{lemma}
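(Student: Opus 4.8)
The statement has two parts: that $X$ is CAT(0), and that $X$ is evenly $\pi$-separated (with vertex cutsets). The plan is to treat these in turn, with the second part being the real work.

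For the CAT(0) claim, since $X$ is a simply-connected triangular complex it suffices, by the link condition (Gromov's criterion for piecewise-Euclidean complexes of shapes), to verify that the link of every vertex, with the angular metric, has no closed geodesic of length less than $2\pi$. Here every triangle is a unit equilateral Euclidean triangle, so every edge of every link has length $\pi/3$; thus the angular metric on $Lk(v)$ is exactly $\pi/3$ times the combinatorial metric. Since $Lk(v)\cong F090A$, which is the Foster graph of girth $10$, the shortest cycle in the link has angular length $10\cdot \pi/3 > 2\pi$, and $F090A$ is triangle-free and square-free so there are no short ``bigon-type'' geodesics either. Hence the link condition holds and $X$ is CAT(0); in particular $X$ is simply-connected, as required for the hypotheses of Theorem~\ref{mainthm: link conditions for the Haagerup property} to apply.

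For the evenly $\pi$-separated claim, we work with vertex cutsets, so $\Delta = X^{(1)}$ and $\sigma = \pi$. First, Lemma~\ref{lem: F090 separation} gives that $F090A$ is $*$-separated, hence in particular vertex $3$-separated (combinatorial metric) by the collection $\mathcal{C}$ of $*$-separated cutsets; rescaling by $\pi/3$ and using that $3\cdot(\pi/3)=\pi$, each link $Lk_X(v)\cong F090A$ (angular metric) is vertex $\pi$-separated. For condition (iii) of the definition of ``evenly $\sigma$-separated'', for each $*$-separated cutset $C$ we take the canonical partition $P(C)$ of $\pi_0(Lk(v)-C)$ (which has exactly two parts since $C$ is $*$-separated); vertex $3$-separatedness, via Lemma~\ref{lem: vertex separated condition}, guarantees that any two points of $Lk(v)$ at distance $\geq \pi$ — equivalently distance $\geq 3$ combinatorially — are separated by some $C$, and then by its canonical partition. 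Finally, and this is the heart of the argument, we must produce an $Aut(X)$-invariant strictly positive integer solution to the gluing equations. The idea is to set $\mu(C,P) := 1$ for every $(C,P)\in\mathcal{CP}$ (equivalently, to weight each $*$-separated cutset equally). We must check the three bullet points: $Aut(X)$-invariance is immediate since $\mu$ is constant; the first gluing equation (splicing along an edge $e$) and the second (different edges $e,e'$ of the same cutset) then reduce to counting statements about how many cutsets through a vertex $v$ place a given pair of neighbours in the same component — precisely the quantities $|\mathcal{C}(v,i,j)| = M/3$ controlled by the $*$-separated condition. The key point is that, because $X$ is a \emph{triangular} complex, the ``star'' $St(v')$ across an edge $e=(v,w)$ consists of just two edges (the two sides of the two triangles on $e$), so the equatability condition $\sim_e$ is governed entirely by whether the two neighbours of $v$ other than the $e$-direction lie on the same side of $C_v$ — and the constancy $|\mathcal{C}(v,i,j)|=M/3$ makes both sides of each gluing equation equal to $M/3$ (or $M/3$ times the appropriate local count).

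The step I expect to be the main obstacle is the bookkeeping for the gluing equations: one must carefully match up, along each oriented edge $e$ of $X^{(1)}$, the equivalence classes $[[C,P]]_e$ in $\mathcal{CP}(e)$ with the classes $[C,P]_{e^{-1}}$ in $\mathcal{CP}(e^{-1})$, using an edge-transitivity (or flag-transitivity) property of $Aut(X)$ — or of $Aut(F090A)$ acting on the relevant local data — to see that these classes biject in a measure-preserving way, and to confirm that the relevant local counts are genuinely independent of the edge and orientation chosen. This is where the distance-regularity of $F090A$ and the homogeneity statement in Lemma~\ref{lem: F090 separation} (that $|\mathcal{C}(v,i,j)|$ is independent of $v$ and of the pair $i\neq j$) do the essential work, reducing an a priori infinite system of equations to a single numerical identity that holds by construction. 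Once all three bullets are verified, $X$ satisfies condition (iv), hence is evenly vertex $\pi$-separated, completing the proof; Corollary~\ref{maincor: application to F090A complexes} then follows from Theorem~\ref{mainthm: link conditions for the Haagerup property}.
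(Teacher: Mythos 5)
Your proposal is correct and follows essentially the same route as the paper: CAT(0) via Gromov's link condition using $girth(F090A)=10$ and edge length $\pi/3$, vertex $\pi$-separation by rescaling the combinatorial metric, canonical partitions for the $*$-separated cutsets, and the constant weight $\mu\equiv 1$ solving the gluing equations because $\vert\mathcal{C}(v,i,j)\vert=M/3$ is independent of the vertex and the pair, exactly as in Lemma \ref{lem: F090 separation}. The "main obstacle" you flag (matching $[[C,P]]_e$ with $[C,P]_{e^{-1}}$ across an edge of the triangular complex) is dispatched in the paper the same way you suggest, via equatability of canonical partitions and the homogeneity of the counts.
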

\begin{proof}
As $girth(F090A)=10$, it follows by Gromov's link condition that $X$ is CAT(0) under this metric. The length of each edge in the link of a vertex is $\pi\slash 3$; since $F090A$ is vertex $3$-separated under the combinatorial metric, it follows that the link of each vertex is vertex $\pi$-separated under the CAT(0) metric.

Now, let $e$ be an oriented edge of $X^{(1)}$ with endpoints $v$ and $w$. 
Endow the links with the canonical partitions for each cutset. It is easily seen that every $C\in C(e,i,j)$ in $Lk(v)$ is equitable along $e$ to any $C'\in C(e^{-1},i',j')$ in $Lk(w)$ for an appropriate choice of $i',j'$. Furthermore, we note that, as $F090A$ is $*$-separated, the gluing equations are immediately solved by setting all weights equal to $1$, since:
	\begin{align*}
	    \sum\limits_{(C',P')\in [[C,P]]_{e}} \mu(C',P')&=\vert C(e,i,j)\vert\\& =M\slash 3\\&=\vert C(e^{-1},i',j')\vert \\&=\sum\limits_{(C',P')\in [C,P]_{e^{-1}}} \mu(C',P').
	\end{align*}
\end{proof}
Corollary \ref{maincor: application to F090A complexes} now follows immediately from Theorem \ref{mainthm: link conditions for the Haagerup property} and the above lemma.
	\bibliographystyle{alpha}
	\bibliography{references.bib}

\begin{thebibliography}{BdlHV08}

\bibitem[Ash20]{Ashcroft2020LinkCF}
Calum~J. Ashcroft.
\newblock Link conditions for cubulation.
\newblock {\em arXiv preprint arXiv:2012.09019}, 2020.

\bibitem[AW81]{Akemann_Martin_Unbounded}
Charles~A. Akemann and Martin~E. Walter.
\newblock Unbounded negative definite functions.
\newblock {\em Canadian J. Math.}, 33(4):862--871, 1981.

\bibitem[BdlHV08]{bekkadelaharpe}
Bachir Bekka, Pierre de~la Harpe, and Alain Valette.
\newblock {\em Kazhdan's {P}roperty ({T})}, volume~11 of {\em New Mathematical
  Monographs}.
\newblock Cambridge University Press, Cambridge, 2008.

\bibitem[BS97]{Ballmann-Swiatkowski}
W.~Ballmann and J.~\'{S}wi\k{a}tkowski.
\newblock On {L2}-cohomology and {P}roperty{ (T)} for automorphism groups of
  polyhedral cell complexes.
\newblock {\em Geom. Funct. Anal}, 7(4):615--645, 1997.

\bibitem[CM11]{Cashen-Macura2011}
Christopher~H. Cashen and Nata\v{s}a Macura.
\newblock Line patterns in free groups.
\newblock {\em Geom. Topol.}, 15(3):1419--1475, 2011.

\bibitem[CMV04]{Cheriz-Martin-Valette_haagerupproperty}
Pierre-Alain Cherix, Florian Martin, and Alain Valette.
\newblock Spaces with measured walls, the {H}aagerup property and {P}roperty
  ({T}).
\newblock {\em Ergodic Theory Dynam. Systems}, 24(6):1895--1908, 2004.

\bibitem[DP19]{delzant2019cubulable}
Thomas Delzant and Pierre Py.
\newblock Cubulable k{\"a}hler groups.
\newblock {\em Geometry \& Topology}, 23(4):2125--2164, 2019.

\bibitem[Fos88]{fostercensus}
Ronald~M. Foster.
\newblock {\em The {F}oster census}.
\newblock Charles Babbage Research Centre, Winnipeg, MB, 1988.
\newblock R. M. Foster's census of connected symmetric trivalent graphs, Edited
  and with a note by Bouwer.

\bibitem[HW14]{Hruska-Wise}
G.C. Hruska and D.T. Wise.
\newblock Finiteness properties of cubulated groups.
\newblock {\em Compositio Mathematica}, 50(3):453--506, 2014.

\bibitem[Kle20]{Klenke2020}
Achim Klenke.
\newblock {\em Probability Theory: A Comprehensive Course}.
\newblock Springer International Publishing, Cham, 2020.

\bibitem[Man10]{Manning10}
Jason~Fox Manning.
\newblock Virtually geometric words and {W}hitehead's algorithm.
\newblock {\em Math. Res. Lett.}, 17(5):917--925, 2010.

\bibitem[\'{S}98]{Swiatkowski}
Jacek \'{S}wi\k{a}tkowski.
\newblock Trivalent polygonal complexes of nonpositive curvature and {P}latonic
  symmetry.
\newblock {\em Geom. Dedicata}, 70(1):87--110, 1998.

\bibitem[Wis04]{Wise-small-cancellation}
D.T. Wise.
\newblock Cubulating small cancellation groups.
\newblock {\em Geometric \& Functional Analysis GAFA}, 14(1):150--214, 2004.

\bibitem[\.{Z}96]{zuk1996}
A.~\.{Z}uk.
\newblock La propri{\'e}t{\'e} {(T)} de {K}azhdan pour les groupes agissant sur
  les polyedres.
\newblock {\em CR Acad. Sci. Paris S{\'e}r. I Math.}, 323:453--458, 1996.

\end{thebibliography}
{\sc{DPMMS, Centre for Mathematical Sciences, Wilberforce Road, Cambridge, CB3 0WB, UK}.} \textit{\textsc{E-mail address}}\textsc{: cja59@dpmms.cam.ac.uk}
\end{document}